\documentclass[noams]{compositio}
\usepackage{amssymb}
\usepackage{amsmath}
\usepackage{lineno,hyperref} 
\usepackage{epsfig}  		% For postscript
\usepackage{epic,eepic}
 \usepackage{enumerate}
 \usepackage{color}
\usepackage{cleveref}
\usepackage{tikz}
\usetikzlibrary{arrows.meta,calc}

\definecolor{sectorfill}{RGB}{244,205,205}
\definecolor{sectorline}{RGB}{168,60,60}
\definecolor{actualfill}{RGB}{218,145,145}
\definecolor{halfplanefill}{RGB}{245,223,210}
\definecolor{targetgray}{RGB}{232,235,238}

\definecolor{modelblue}{RGB}{205,226,244}
\definecolor{deepblue}{RGB}{111,168,211}
\definecolor{actualblue}{RGB}{43,103,152}
\newtheorem{theorem}{Theorem}[section]
\newtheorem{proposition}[theorem]{Proposition}
\newtheorem{lemma}[theorem]{Lemma}

\newtheorem{question}[theorem]{Question}

\theoremstyle{remark}

\Crefname{theorem}{Theorem}{Theorems}

\newcommand{\C}{\mathbb C}
\newcommand{\D}{\mathbb D}
\newcommand{\R}{\mathbb R}
\newcommand{\T}{\mathbb T}
\newcommand{\Hh}{\mathbb H}
\newcommand{\Bcal}{\mathcal B}
\newcommand{\Bmer}{\mathcal B_{\mathrm{mer}}}

\newcommand{\Lcal}{\mathcal L}
\newcommand{\sing}{\operatorname{sing}}
\newcommand{\dist}{\operatorname{dist}}
\newcommand{\J}{\mathcal{J}}
\newcommand{\F}{\mathcal{F}}

\newcommand{\dd}{\,\mathrm d}

\renewcommand{\contentsname}{Contents\\{\footnotesize\normalfont(A table
of contents should normally not be included)}}
\begin{document}

\title[Julia limiting directions for \(\Bcal\)]{The inverse problem for Julia limiting directions
in the Eremenko--Lyubich class}
\author{Jiaxing Huang}
\email{hjxmath@szu.edu.cn}
\address{School of Mathematical Sciences, Shenzhen University, 518061,  China}
%\address{School of Mathematical Sciences, Shenzhen University, Shenzhen 518060, P.R. China}
\author{Jun Wang}
\email{majwang@fudan.edu.cn}
\address{School of Mathematical Sciences, Fudan University, Shanghai, 200433, China.}

\author{Xiao Yao}
\email{yaoxiao@nankai.edu.cn}
\address{School of Mathematical Sciences, Nankai University,  China}
%\dedication{A dedication can be included here.}
\classification{Primary 37F10; Secondary 30D15, 30D35, 30E10}
%At least one subject code is required. Please refer to
%\url{http://www.ams.org/msc/} for a list of codes.}
\keywords{Julia limiting direction, Eremenko--Lyubich class, entire and meromorphic functions}
\thanks{Huang was partially supported by the NSFC (No.12201420 and 12231013)
and the NSF of Guangdong Province (No. 2026A1515010851). Wang was partially supported by the NSFC (No.12471072) and National Key R$\&$D Program of
China Grant (No.2021YFA1003200).
Yao was supported by NSFC of China (No. 12371074) and the Fundamental Research Funds for the Central Universities  (No. 63263084).
}

\begin{abstract}
We study the inverse problem for Julia limiting directions in the Eremenko-Lyubich class $\mathcal B$, which is the class of transcendental entire functions with a bounded set of singular values, and its meromorphic generalization $\mathcal B_{mer}$. Every nonempty compact subset of the circle can be the set of Julia limiting directions of an entire $f\in\mathcal B$. For finite unions of nondegenerate closed arcs, the corresponding $f\in\mathcal B$, even with every finite order at or above an explicit bound determined by the arc lengths, can be attained. We also refine the series construction of the latter two authors to obtain $f\in \mathcal B_{mer}$ with any prescribed order and the set of Julia limiting directions.
\end{abstract}
\maketitle

\section{Introduction and main results}

Let $f$ be a transcendental meromorphic function in the complex plane $\C$. By $\F(f)$ we mean the Fatou set of $f$ which is defined to be the set of points $z\in\C$ such that
all iterates $\{f^n\}_n$ are defined and form a normal family on a neighbourhood of $z$.  The complement of $\F(f)$ in $\C$ is the Julia set $\J(f)$ of $f$.
Standard references and recent research progress in transcendental iteration
theory can be found in the surveys \cite{Bergweiler, Schleicher, bergweiler25, EremenkoLyubich1990}. 

We also need some notations which come from value distribution theory, see \cite{Goldberg,Hayman}. For example, $T(r,f)$ and $M(r,f)$ denote the Nevanlinna characteristic function and the maximum modulus of $f$, respectively. The order $\rho(f)$ and lower order $\lambda(f)$ of $f$ are defined by
\[
\rho(f):=\limsup_{r\to\infty}
\frac{\log T(r,f)}{\log r},
\qquad
\lambda(f):=\liminf_{r\to\infty}
\frac{\log T(r,f)}{\log r}
\]
where $T(r,f)$ can be replaced by $\log M(r,f)$ for entire functions $f$.   

Throughout this paper, we use $\widehat{\C}=\C\cup\{\infty\},\Hh=\{\operatorname{Re}\zeta>0\},\T=\R/(2\pi\mathbb Z)$ to denote the Riemann sphere, the right half plane and the unit circle, respectively.
In addition, we write $$D(z,r)=\{w\in\C:|w-z|<r\},\quad \D=D(0,1).$$

A direction
$\theta\in\T$ is called a \emph{Julia limiting
direction} of $f$ if there is a sequence $\{z_k\}$ in the Julia set $\J(f)$ of $f$ such that as $k\to\infty$, 
\[
             |z_k|\rightarrow\infty,
       \qquad \arg z_k\rightarrow\theta\pmod{2\pi}.
\]
This notion was first introduced by Qiao \cite{Qiao} in his study of the
radial distribution of Julia sets. It is different from Julia direction in the classical value distribution theory, which is defined in terms of the distribution of preimages of values in arbitrarily small angular domains.

By $\Lcal(f)$ we mean the set of all Julia limiting directions of $f$. This set is also referred to as the \emph{radial distribution} of $\J(f)$ (see, for
example, \cite{Fletcher2021,FletcherSteranka2025}). More generally, for an unbounded set $X\subset\C$, define the \emph{radial distribution} of $X$ by 
\begin{equation*}
 L(X):=
 \left\{\theta\in\T:
 \begin{array}{l}
 \exists z_k\in X\text{ with }|z_k|\to\infty,\
 \arg z_k\to\theta\pmod{2\pi}
 \end{array}\right\};
\end{equation*}
thus $\Lcal(f)=L(\J(f))$. Since \(\J(f)\) is unbounded, clearly \(\Lcal(f)\) is a nonempty closed subset of \(\mathbb T\). The inverse problem is to determine which nonempty closed subsets of \(\mathbb T\) can occur as \(\Lcal(f)\). 

The exponential family provides a simple illustration of the dependence of Julia limiting directions on the singular value. Let \(E_\lambda(z)=\lambda e^z\), where \(\lambda\neq0\). 
The second and third authors proved that if its singular value $0\in \mathcal F(E_\lambda)$, then $\Lcal(E_\lambda)=TD(E_\lambda)$,
whereas if $0\in \mathcal J(E_\lambda)$, $
\Lcal(E_\lambda)=\mathbb T$, see \cite[Example~1]{WangYao}. 
Thus, even in the Speiser class, the set of Julia limiting directions may depend essentially on the position of a singular value.

For a transcendental entire
function $f$ of finite lower order $\mu$, Qiao \cite{Qiao}
proved a strong growth obstruction that $\Lcal(f)$ contains a closed arc $I$
such that its Lebesgue measure
\begin{equation}\label{eq:qiao-intro}
                   |I|\geq\frac{\pi}{\max\{1/2,\mu\}}.
\end{equation}
Thus, $\Lcal(f)=\mathbb T$ if $\mu\leq 1/2$, while a finite set, a Cantor set, or any compact
set containing no nondegenerate arc can be only realized by an entire function of infinite lower order. 

The measure of $L(f)$ for meromorphic $f$ has also been studied; see, for example, Zheng, Wang and Huang \cite{ZhengWangHuang}. Wang and Chen \cite{WangChen} investigated Julia
limiting directions of entire solutions to complex
differential equations.

As to the inverse problem, Wang and Yao \cite[Theorem~2]{WangYao} proved that every nonempty compact set
$E\subset\T$ is equal to $\Lcal(f)$ for some entire function
$f$ of infinite lower order. They also constructed entire functions with finite order for certain finite unions of arcs whose widths satisfy the restriction suggested by \eqref{eq:qiao-intro}. Their meromorphic case is even more flexible: for every nonempty compact
$E\subset\T$ and every $\rho\in[0,\infty]$, there is a transcendental
meromorphic function $g$ of order $\rho$ such that
$
                         \Lcal(g)=E.
$
Later, related problems concerning Julia limiting directions
were also studied in higher dimensional quasiregular dynamics;
see \cite{Fletcher2021,FletcherSteranka2025}.

For a transcendental entire function \(f\), write
\[
S(f):=\overline{\operatorname{sing}(f^{-1})}
\]for the closure of its critical and finite asymptotic values. The Eremenko--Lyubich and Speiser classes are defined by
\[
\mathcal B:=\{f:S(f)\text{ is bounded}\}
\quad\text{and}\quad
\mathcal S:=\{f:S(f)\text{ is finite}\},
\]
respectively. 
These classes play a central role in transcendental dynamics because singular values are the obstructions to the continuation of inverse branches and largely govern the global dynamics. For \(f\in\mathcal B\), the behavior near infinity can be described by logarithmic tracts over the complement of a sufficiently large disk. The Speiser class \(\mathcal S\subset\mathcal B\) imposes the stronger condition that only finitely many singular values occur.

It remains unclear whether the entire functions obtained in the construction of \cite{WangYao} belong to $\mathcal B$ or $\mathcal S$. As we know, there are no references that have settled the inverse problem of Julia limiting directions under a restriction on singular values. This motivates the following question.
\begin{question}
Given a nonempty compact set \(E\subset\mathbb T\), does there exist a transcendental entire function \(f\in\mathcal B\) or \(f\in\mathcal S\)  such that
\(
\Lcal(f)=E?
\)
\end{question}

We answer Question~1.1 affirmatively for \(\mathcal B\). The corresponding problem for \(\mathcal S\) remains open, even when \(E\) is finite. Two properties of the class $\Bcal$, which was proved by Eremenko and Lyubich \cite{EL}, are particularly relevant. One property is that for \(f\in\mathcal B\), its escaping set $I(f)\subset \mathcal J(f)$, where 
\(I(f):=\{z\in\mathbb C:f^n(z)\to\infty\}\). The other is that every component of \(f^{-1}(\{w:|w|>R\})\) is a logarithmic tract for sufficiently large \(R\). This tract structure forms the basis of our construction. 
\begin{theorem}\label[theorem]{thm:compact-main}
Let \(E\subset\T\) be nonempty and compact. Then there exists a transcendental entire function \(f\in\Bcal\) such that $\Lcal(f)=E.$

\end{theorem}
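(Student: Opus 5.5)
We will build \(f\in\Bcal\) whose Julia set lies near infinity in thin tracts accumulating exactly at the directions of \(E\), and whose Fatou set contains everything else near infinity. The construction runs in three steps: realize \(E\) by tract directions, control the tract geometry, and control the dynamics.

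\emph{Step 1: reduce \(E\) to a countable set of directions.} Choose a countable dense subset \(\{\theta_j\}_{j\ge1}\subset E\). If \(E\) is finite, repeat points so the sequence is still infinite. Assign each \(\theta_j\) a sequence of radii tending to infinity, so that every direction in \(E\) is approached from infinity infinitely often. We will place infinitely many pairwise disjoint, very thin "tentacle" tracts \(T_{j}\). Each \(T_j\) starts near a large radius \(r_j\) and runs out to infinity asymptotically along the ray \(\arg z=\theta_j\), with angular width tending to zero. The requirement on accumulation is that the set of limit directions of \(\bigcup_j T_j\) is exactly \(\overline{\{\theta_j\}}=E\). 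To arrange this, let the \(T_j\) emanate from a common central region and take \(r_j\to\infty\) fast. Then any sequence in \(\bigcup T_j\) tending to infinity has arguments accumulating only on \(E\). One could also use tracts that wind between directions, but straight tentacles suffice and keep \(L(\bigcup T_j)=E\) transparent.

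\emph{Step 2: realize the tract geometry by a function in \(\Bcal\).} Take a simply connected domain \(\Omega\) equal to the union of the tentacles, together with a disk connecting them, and a conformal map \(\phi:\Omega\to\Hh\) with \(\phi(\infty)=\infty\). On \(\Omega\) we have the model \(g=\exp\circ\phi\), and \(g\) is bounded by \(1\) on \(\partial\Omega\). We then invoke Bishop's quasiconformal folding, or the simpler approximation theorem of Rempe for models of class \(\Bcal\). This produces an entire \(f\in\Bcal\) and a quasiconformal \(\psi\) such that \(f\circ\psi=g\) on \(\psi^{-1}(\Omega)\) (up to a slight shrinking), with \(|f|\) bounded outside \(\psi^{-1}(\Omega')\). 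The key requirement is that \(\psi\) is close to the identity near infinity, say \(\psi(z)/z\to1\) as \(z\to\infty\). For this we want tentacles with uniformly bounded geometry, so that the dilatation is supported on a set of finite logarithmic area; the Teichmüller--Wittich--Belinskii theorem then gives conformality at \(\infty\). Under this condition the tracts of \(f\), namely the components of \(f^{-1}(\{|w|>R\})\), still have radial distribution \(E\).

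\emph{Step 3: dynamics.} For the inclusion \(E\subset\Lcal(f)\), we use Eremenko--Lyubich: \(I(f)\subset\J(f)\). Expansion of \(\log f\) in each tract gives escaping points in every \(T_j\) that tend to infinity along the tract, since every tract contains points of \(\J(f)\) arbitrarily far out. So every \(\theta_j\) lies in \(\Lcal(f)\), and \(\Lcal(f)\) is closed. For the reverse inclusion, we arrange that \(|f|\le C\) off the tracts and that \(f\) maps the complement of the tracts, together with a neighbourhood of \(S(f)\), into a fixed disk \(D\) with \(f(\overline D)\subset D\). For this, normalize by composing with an affine map so that \(f(D(0,R_0))\) is compactly contained in an attracting basin, for instance \(\lambda\,\cdot\) with small \(\lambda\). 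Then every point outside the tracts near infinity lies in \(\F(f)\), so \(\J(f)\setminus D(0,R)\subset\bigcup T_j\) and \(\Lcal(f)\subset E\).

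We expect the main obstacle to be this last arrangement together with the control of \(\psi\) near infinity. Folding introduces extra tracts and branches near \(\partial\Omega\), and these must also be mapped into the basin. To handle it we would use the fact that the folded function is bounded off \(\psi^{-1}(\Omega)\), choose the scale so that this bound sits inside the attracting disk, and check that the thin tentacles do not force unbounded dilatation.
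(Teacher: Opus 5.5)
\paragraph{A gap in Step~2.} The geometry you set up in Step~1 leads to a construction in Step~2 that does not work. You join the tentacles by a disk into a \emph{single} simply connected domain $\Omega$ and use a \emph{single} conformal map $\phi:\Omega\to\Hh$ ``with $\phi(\infty)=\infty$''. But $\Omega$ has a separate prime end at infinity for each tentacle. The map $\phi$ can send only one of these, say the end of $T_{j_0}$, to $\infty$. Every other tentacle end goes to a finite point of $\partial\Hh$, so $\operatorname{Re}\phi(z)\to0$ and $|g(z)|=e^{\operatorname{Re}\phi(z)}\to1$ as $z\to\infty$ in $T_j$, $j\ne j_0$.

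Consequently the model, and any $f$ obtained from it by Rempe's or Bishop's theorem, is large only in $T_{j_0}$. After your rescaling into an attracting basin, the far parts of all other tentacles lie in $\F(f)$, and you would get $\Lcal(f)=\{\theta_{j_0}\}$ rather than $E$. Your Step~3 claim that every $T_j$ contains Julia points arbitrarily far out assumes that every $T_j$ contains a logarithmic tract, and this construction does not provide that. There are also two smaller problems. Infinitely many tentacles attached to one bounded disk cannot form a locally finite family. And ``emanating from a common central region'' contradicts ``starting near $r_j\to\infty$''.

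\paragraph{The fix, and how it compares with the paper.} Keep the tentacles as separate components, each with its own conformal map sending its single end to $\infty$.
\begin{itemize}
\item The paper takes pairwise disjoint, locally finite half strips $\Sigma_n=e^{i\theta_n}\{x+iy:x>R_n,\ |y|<1\}$ with $R_n\to\infty$ and $\{\theta_n\}$ dense in $E$. This gives $L(\bigcup_n\Sigma_n)=E$.
\item It applies Rempe's approximation theorem to each strip separately, obtaining $G_n=e^{\Psi_n}+O(1/z)$ on $\Sigma_n$ and $G_n=O(1/z)$ off $\Sigma_n$.
\item It forms $F=\sum_n\varepsilon_nG_n$ with $\sum_n\varepsilon_nC_n<\infty$.
\item Rempe's stability proposition then shows that $F^{-1}(\{|w|>2A\})$ is exactly a union of logarithmic tracts $V_n\subset\Sigma_n$. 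This exact decomposition is what shows $F\in\Bcal$.
\end{itemize}
Since no quasiconformal change of variable occurs, your concerns about controlling $\psi$ near infinity do not arise. Bishop's theorem applied to the multi-component model would also work, but you must check that $\sum_n\int_{\Omega_n(1/2,2)}|z|^{-2}\,dA(z)<\infty$. This holds when $R_n$ grows fast; uniformly bounded geometry alone does not give it.

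From there your Step~3 is essentially the paper's argument. Setting $f=\eta F$ with $f(\overline{\D})\subset D(0,1/2)$ and $|f|<1/2$ off the strips gives $\J(f)\subset\bigcup_n\Sigma_n$. For $E\subset\Lcal(f)$, the paper pulls back a point of $A(f)$ under the universal coverings $f:V_n(1)\to\{|w|>1\}$. This gives unbounded sets of Julia points in every strip, and closedness of $\Lcal(f)$ then gives all of $E$.
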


Theorem \ref{thm:compact-main} gives a geometric realization of arbitrary compact direction sets, but the construction based on Rempe's approximation theorem generally yields functions of infinite lower order. For the case of finite order, Qiao's theorem shows that compact direction sets must contain at least one nondegenerate arc. Thus, it is natural to consider the finite order case for direction sets consisting of finitely many nondegenerate closed arcs. For this purpose, we employ Bishop's model theorem, which allows us to control the growth of the resulting functions.

\begin{theorem}\label[theorem]{thm:arcs-main}
Let $E\subset\T$ be a nonempty compact set, the connected components of which are nondegenerate closed arcs $I_j\ (j=1,2,\ldots,m)$ of length $b_j\in(0, 2\pi]$. 
Let
\[
 \rho_E:=\max_{1\leq j\leq m}\frac{\pi}{b_j}.
\]
Then, for every $\rho\in[\rho_E,\infty)$, there exists a
transcendental entire function $f\in\Bcal$ such that
$\Lcal(f)=E$ and $\lambda(f)=\rho(f)=\rho$.
\end{theorem}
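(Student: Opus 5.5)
We will build $f$ from Bishop's quasiconformal folding / model theorem for class $\Bcal$. The model consists of $m$ logarithmic tracts $\Omega_j$, one for each arc $I_j$, together with a conformal map $\tau$ from $\Omega:=\bigcup_j\Omega_j$ onto $\Hh$, with $F=\exp\circ\tau$ on $\Omega$. Write $I_j=[\alpha_j,\alpha_j+b_j]$. We take $\Omega_j$ to be a slightly thinned sector of opening $\pi/\rho$ whose axis points in direction $\theta_j$. Since $\rho\ge\rho_E\ge\pi/b_j$, we have $\pi/\rho\le b_j$, so such a sector fits inside the arc $I_j$.

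\emph{Step 1: making the radial distribution equal $I_j$ despite the narrow tract.} If $\pi/\rho<b_j$, a single fixed sector would only produce Julia directions in a sub-arc of $I_j$. We therefore let the axis direction $\theta_j(r)$ of $\Omega_j\cap\{|z|\approx r\}$ oscillate slowly, like $\log\log r$, between $\alpha_j+\pi/(2\rho)$ and $\alpha_j+b_j-\pi/(2\rho)$. We then check that this slow twisting preserves the estimate $\log|\tau(z)|=\rho\log|z|+o(\log|z|)$ on the axis, using Ahlfors distortion / Warschawski-type estimates for a strip-like domain of slowly varying width. Bishop's theorem gives an entire $f\in\Bcal$ and a quasiconformal $\phi$ with $f\circ\phi=F$ off a small neighbourhood of $\partial\Omega$. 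Since $\phi$ is asymptotically conformal (the dilatation is supported on a set of finite logarithmic area), we get $\phi(z)=z(1+o(1))$ after normalisation. Hence the tracts of $f$ have the same radial distribution, namely $\bigcup_j I_j=E$, and $\log M(r,f)=r^{\rho+o(1)}$, giving $\lambda(f)=\rho(f)=\rho$. When $\pi/\rho=b_j$ no twisting is needed.

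\emph{Step 2: proving $\Lcal(f)=E$.} For $\Lcal(f)\supset E$, use $I(f)\subset\J(f)$ (Eremenko--Lyubich). Along the axis curve of each tract, $\operatorname{Re}\tau\to\infty$, so $|f|$ grows rapidly and the standard escaping-point construction via the logarithmic change of variables gives escaping points, hence Julia points, at every large modulus near the axis. Because the axis sweeps through every direction of $I_j$ infinitely often, this yields $I_j\subset\Lcal(f)$. For $\Lcal(f)\subset E$, we show that outside $\bigcup_j\{\arg z\in I_j\pm\varepsilon\}$, and for $|z|$ large, $f$ is bounded by a constant. The model sends the complement of $\Omega$ to a bounded region; by Bishop's construction, $|f|\le R_0$ on $\C\setminus\phi(\Omega)$ away from a thin neighbourhood of the boundary. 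We then arrange in advance that $f(\{|w|\le R_0\})$ lies in a bounded attracting basin. Concretely, precompose and rescale, replacing $f$ by $\lambda f$ with $|\lambda|$ small, so that the disk $D(0,R_0')\supset S(f)$ is mapped into itself, is contained in $\F(f)$, and has closure inside an attracting basin of a fixed point. Then the complement of the tracts, intersected with a large annulus, lies in $\F(f)$, and $\Lcal(f)\subset E$ follows.

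\emph{Main obstacle.} The hardest step is Step 1's geometric estimate. We must show that the twisting tracts keep order exactly $\rho$ with $\lambda=\rho$, while the boundary strips where Bishop's folding acts stay angularly within $E$ and within $\{|f|\le R_0\}$. The first approach is to model $\log\Omega_j$ as a strip of constant width $\pi/\rho$ whose centre line $\theta_j(e^t)$ has derivative $o(1)$. Applying Warschawski's theorem then gives $\tau(e^{t+i\theta_j})=\exp(\rho t+o(t))$, which is what yields the order and lower order. The scaling by $\lambda$ in Step 2 does not change the order or the tract geometry.
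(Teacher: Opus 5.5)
Your route is genuinely different from the paper's and is viable, but one step of Step 2 fails as written.

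The overall architecture is the same as the paper's. You use Bishop's realization theorem, finite logarithmic area of $\Omega(1/2,2)$ together with Teichm\"uller--Wittich--Belinskii to get $\phi(z)=z+o(|z|)$, Warschawski's strip theorem for the growth, and a rescaling $f\mapsto\lambda f$ that sends everything off $\phi(\Omega(2))$ into an attracting disc, giving $\Lcal(f)\subset E$. Bishop's theorem gives $|f\circ\phi|\le e^2$ on all of $\C\setminus\Omega(2)$, so no ``thin neighbourhood'' exception is needed.

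The tract geometry is where you differ. The paper keeps each tract symmetric about a fixed direction. For $\rho=\rho_E$ it takes the full power sector over $I_j$. For $\rho>\rho_E$ it takes logarithmic strips $|y|<h_j(x)$ with $h_j\equiv\pi/(2\rho)$ except on sparse intervals around $x_n=2^{n^2}$, where $h_j$ bulges almost to half the length of $I_j$. These bulges have zero density, so $\chi_j(x)/x\to\rho$, while at the scales $x_n$ every direction of the open arc lies well inside the tract.

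Your constant-width strip with slowly oscillating centre line $\gamma(t)$ treats both cases at once. Since $\gamma'\to0$, $\int|\gamma''|<\infty$ and $\int\gamma'^2<\infty$, it even gives $\operatorname{Re}\chi=\rho t+O(1)$, i.e.\ $\log M(r,f)\asymp r^\rho$. This is sharper than the $r^{\rho+o(1)}$ of the bulging construction. The price is losing symmetry:
\begin{itemize}
\item The lower bound $|\chi'|\ge c$ behind the finite logarithmic area, which you only assert, cannot come from the reflection argument of \Cref{lem:strip-derivative}. It must be reproved for non-symmetric strips.
\item You need Warschawski's theorem in its non-symmetric form; its hypotheses do hold here.
\end{itemize}
For $\Lcal(f)\supset E$ the paper uses \Cref{prop:large-values-julia}. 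Your shadowing of escaping points via the Eremenko--Lyubich expansion also works, provided you record that the escaping point lies within $O(1/\operatorname{Re}\tau)$ of the given point in logarithmic coordinates, i.e.\ within $o(|z|)$.

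The step that fails is the claim that the axis sweeps every direction of $I_j$. By your own choice the axis oscillates only between $\alpha_j+\pi/(2\rho)$ and $\alpha_j+b_j-\pi/(2\rho)$. If $b_j=\pi/\rho$, for instance the extremal arc when $\rho=\rho_E$, this is a single direction. Escaping points near the axis plus closedness therefore give only a sub-arc of length $b_j-\pi/\rho$. Your on-axis estimate $\log|\tau|=\rho t+o(t)$ says nothing about the remaining directions.

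To repair this, use the full, uniform Warschawski expansion $\chi(t+iy)=\rho t+\kappa+i\rho\,(y-\gamma(t))+o(1)$. Given $\theta\in(\alpha_j,\alpha_j+b_j)$, pick $t_n\to\infty$ with $\gamma(t_n)$ equal to $\theta$, or to the nearer extreme of the sweep range. Then $|\theta-\gamma(t_n)|\le\pi/(2\rho)-\delta$ for some $\delta>0$, and so $\operatorname{Re}\tau(e^{t_n+i\theta})\ge e^{\rho t_n+\kappa+o(1)}\bigl(\sin(\rho\delta)-o(1)\bigr)\to\infty$. Applying your escaping-point argument, or \Cref{prop:large-values-julia}, at these off-axis points gives $\theta\in\Lcal(f)$. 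The endpoints of $I_j$ then follow by closedness. This is how the paper treats off-centre directions in both of its models.
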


For $\rho>1/2$, \Cref{thm:arcs-main} strengthens
\cite[Theorem~3]{WangYao} by allowing the realizing
function to belong to $\Bcal$. The endpoint
$\rho=1/2$, which occurs only when $E=\T$,
is also included.
 
For meromorphic functions, large values may occur in
bounded components surrounding poles. By controlling 
the positions and multiplicities of these poles, the second and third authors \cite{WangYao} realize any nonempty compact direction set at any given order.

Following Bergweiler and Kotus
\cite{BergweilerKotus}, let $\Bmer$ denote the class of transcendental
meromorphic functions $g$ with bounded $S_{\mathrm{fin}}(g)$, where  
\[
 S_{\mathrm{fin}}(g):=
 \overline{\sing(g^{-1})\setminus\{\infty\}}.
\]
The closure is taken in $\C$, and $\infty$ is allowed to be a singular value.
Our meromorphic realizations corresponding to \cite[Theorem 1]{WangYao} can be chosen in \(\Bmer\) by reconsidering the meromorphic series constructed. 

\begin{theorem}\label[theorem]{thm:meromorphic-main}
Let $E\subset\T$ be nonempty and compact, and let
$\rho\in[0,\infty]$. Then there exists a transcendental meromorphic function
$g\in\Bmer$ such that
\(
                       \rho(g)=\rho
 \) and
 \(\Lcal(g)=E.
\)
\end{theorem}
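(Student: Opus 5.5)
We plan to modify the Wang--Yao meromorphic series construction so that the finite singular values stay bounded. Recall the shape of that construction. Choose a countable set $\{\theta_k\}$ dense in $E$, with each direction repeated infinitely often. Choose radii $r_k\to\infty$ very rapidly, and put a pole $a_k=r_ke^{i\theta_k}$ of multiplicity $m_k$. Define
\[
g(z)=\sum_k \frac{c_k}{(z-a_k)^{m_k}},
\]
possibly adding an entire part that is small away from the poles. The aim is that $g$ is tiny outside the small disks $D(a_k,\delta_k|a_k|)$. Then every point outside these disks lies in an attracting basin of a fixed point near $0$. Hence $\J(g)$ lies in $\bigcup_k D(a_k,\delta_k|a_k|)$. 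Each such disk contains a pole, so it meets $\J(g)$, and this gives $\Lcal(g)=E$. The order is tuned through the counting function $n(r,\infty)$, that is, through the growth of $\sum_{r_k\le r}m_k$ and of $\log r_k$. For $\rho=0$ take very sparse poles of fixed multiplicity; for $\rho=\infty$ let $m_k$ grow superpolynomially in $r_k$.

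\textbf{Step 1: bound the finite singular values.} Since $g$ is a convergent sum with poles of finite order, it has no finite asymptotic values: an asymptotic path must escape, and along it $g\to 0$, so the only candidate is $0$. We should still check that the tail estimate $|g|\le\varepsilon$ holds uniformly off the disks, which forces any asymptotic value to lie in $\overline{D(0,\varepsilon)}$. Critical points split into two groups.
\begin{itemize}
\item Those outside the disks have critical values of modulus at most $\varepsilon$.
\item Inside $D(a_k,\delta_k|a_k|)$ write $g=c_k(z-a_k)^{-m_k}+h_k(z)$, where $|h_k|\le\varepsilon$ and $|h_k'|$ is small. On the circle $|z-a_k|=\rho_k$ where the pole term has modulus comparable to $1$, use a Rouché-type argument applied to $g'$. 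This shows that $g'$ has no zeros in the punctured disk where $|c_k(z-a_k)^{-m_k}|\ge 2$, so critical values there are bounded by about $3$. In the annulus between $\rho_k$ and $\delta_k|a_k|$, the value $|g|$ is at most about $2+\varepsilon$ whenever the pole term is at most $2$.
\end{itemize}
Together these give $S_{\mathrm{fin}}(g)\subset\overline{D(0,3)}$, so $g\in\Bmer$.

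\textbf{Step 2: choose the parameters.} We pick $c_k$ and $m_k$ so that $|c_k|/(\delta_k|a_k|)^{m_k}$ is summably small, and $r_{k+1}$ so large that $\sum_{j\ne k}|c_j||z-a_j|^{-m_j}$ is tiny on $D(a_k,\delta_k|a_k|)$. The same choices must keep the derivative of the tail small there. Only finitely many poles lie in each disk, so the order computation is identical to Wang--Yao's. For the dynamics, points off the disks map into $D(0,\varepsilon)$, which the same bounds show is mapped into itself. So $\F(g)$ contains the complement of the disks, while each pole lies in $\J(g)$.

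\textbf{Main obstacle.} The delicate step is Step 1 in the regime $\rho=\infty$ or large $m_k$. There the perturbation $h_k'$ must be dominated by $m_kc_k(z-a_k)^{-m_k-1}$ down to the level $|g|\approx2$, uniformly in $k$. We will first try to enforce
\[
\sup_{D(a_k,\delta_k|a_k|)}|h_k'|\le \frac{m_k}{4\delta_k|a_k|}
\]
by taking $r_{k+1}$ huge, since on the relevant circle the pole term's derivative is comparable to $m_k/\rho_k\ge m_k/(\delta_k|a_k|)$. If that fails, we fall back on normalizing $c_k=(\delta_k|a_k|/2)^{m_k}$.
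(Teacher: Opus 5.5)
Your proposal is correct and follows essentially the paper's argument: the Wang--Yao pole series; domination of $h_k'$ by the derivative of the dominant pole term, so that $g'$ has no zeros in a punctured neighbourhood of each $a_k$ outside of which $|g|$ is bounded; the connectedness argument that confines the tail of an asymptotic path to one bounded disk; and $\J(g)$ trapped in disks around the poles (for $\Lcal(g)\subset E$ your radii $\delta_k|a_k|$ must be $o(|a_k|)$, i.e.\ $\delta_k\to0$). The paper is simpler: it keeps $c_k=1$ and works in the unit disks $D(a_k,1)$, where $|h_k'|\le B_1<1\le m_k|z-a_k|^{-m_k-1}$ makes your ``main obstacle'' trivial, and it only multiplies by a small constant $\lambda$ at the end, which also lets it quote the Wang--Yao order computation verbatim instead of re-checking $m(r,g)$ for modified coefficients.
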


The paper is organized as follows. 
\begin{itemize}
\item Section~\ref{sec:pre} records some elementary
properties of radial distributions, provides a
criterion for Julia limiting directions, and
constructs locally finite families of half strips
used to realize compact direction sets.

\item Section~\ref{sec:Remp} develops an approximation argument
based on Rempe's theorem and establishes the exact decomposition of the preimage $\{w:|w|>R\}$ into logarithmic tracts. 

\item In Section~\ref{sec:inv}, we apply
the construction in Section~\ref{sec:Remp} to prove \Cref{thm:compact-main}. 

\item Sections~\ref{sec:B} and~\ref{sec:realB}
are devoted to the finite order case. Section~\ref{sec:B} records the
form of Bishop's realization theorem that we use, treats power sector
models, and establishes estimates for strips of variable width.
Section~\ref{sec:realB} uses these models to prove
\Cref{thm:arcs-main}.

\item Section~\ref{sec:mero} contains the proof of \Cref{thm:meromorphic-main}
\end{itemize}

\paragraph{\bf Acknowledgment.}
This work was supported by the Visiting Scholar Program
of the Chern Institute of Mathematics at Nankai University
and was completed during the first author's visit to the
School of Mathematical Sciences at Fudan University.
The first author thanks both institutions for their hospitality.
The authors also thank Christopher Bishop and Walter Bergweiler
for helpful discussions and suggestion for the constructions. 
%suggestion and  discussion for the preparation of this paper.  

\paragraph{\bf AI Disclaimer:} The main ideas and original proof strategies for 
\Cref{thm:compact-main}, \Cref{thm:arcs-main} and \Cref{thm:meromorphic-main} were developed by the authors without AI assistance and were already present in an earlier version of the manuscript.
 ChatGPT was subsequently used to explore and refine certain technical arguments, as well as to improve the language and exposition.

\section{Some lemmas on radial distributions}\label{sec:pre}
We begin with some preliminary results on radial distributions and the construction of disjoint half strips. Next two lemmas are directly from the definitions of radial distribution and half strip, respectively.
\begin{lemma}\label[lemma]{lem:LD-compact}
If $X\subset\C$ is unbounded, then $L(X)$ is a nonempty compact subset
of $\T$.
\end{lemma}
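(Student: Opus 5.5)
The plan is to prove nonemptiness using compactness of $\T$, and compactness by showing that $L(X)$ is closed in the compact space $\T$.

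\emph{Nonemptiness.} Since $X$ is unbounded, we can choose $z_k\in X$ with $|z_k|\to\infty$; in particular $z_k\neq 0$ for large $k$, so $\arg z_k\in\T$ is defined. Because $\T$ is compact, some subsequence satisfies $\arg z_{k_j}\to\theta$ for some $\theta\in\T$. Along that subsequence we still have $|z_{k_j}|\to\infty$. Hence $\theta\in L(X)$.

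\emph{Closedness.} Let $\theta_n\in L(X)$ with $\theta_n\to\theta$ in $\T$. For each $n$, the definition of $L(X)$ gives a sequence in $X$ tending to $\infty$ with arguments tending to $\theta_n$. From it we pick one point $w_n\in X$ with
\[
|w_n|>n \quad\text{and}\quad \dist_{\T}(\arg w_n,\theta_n)<1/n .
\]
Then $|w_n|\to\infty$, and by the triangle inequality in $\T$,
\[
\dist_{\T}(\arg w_n,\theta)\le 1/n+\dist_{\T}(\theta_n,\theta)\to 0 .
\]
So $\theta\in L(X)$. A closed subset of the compact space $\T$ is compact.

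There is no real obstacle here. The only point needing care is the diagonal choice in the closedness step, together with treating $\arg$ as a continuous map to $\T$ on $\C\setminus\{0\}$, with the metric on $\T$ taken modulo $2\pi$.

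An equivalent formulation, which could be given as a remark, is
\[
L(X)=\bigcap_{R>0}\overline{\arg\bigl(X\setminus \overline{D(0,R)}\bigr)}.
\]
This is a decreasing intersection of nonempty compact sets, so it is nonempty and compact.
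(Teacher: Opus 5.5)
Your proof is correct and is the standard argument the paper has in mind. The paper gives no proof, saying only that the lemma follows directly from the definition of $L(X)$; your sequential-compactness argument for nonemptiness, the diagonal argument for closedness, and the equivalent nested-intersection description fill in exactly those omitted details.
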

\begin{lemma}\label{lem:one-strip}
For $R,a>0$ and $\theta\in\T$, a half strip 
\[
 \Sigma(\theta;R,a):=
 \{e^{i\theta}(x+iy):x>R,\ |y|<a\}
\] must have $L(\Sigma(\theta;R,a))=\{\theta\}.$
\end{lemma}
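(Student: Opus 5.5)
The plan is to prove the two inclusions $\{\theta\}\subset L(\Sigma)$ and $L(\Sigma)\subset\{\theta\}$ separately, writing $\Sigma=\Sigma(\theta;R,a)$. Both follow directly from the definitions: points of $\Sigma$ have the form $z=e^{i\theta}(x+iy)$ with $x>R$ and $|y|<a$, so $\arg z=\theta+\arctan(y/x)$ modulo $2\pi$.

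For $\{\theta\}\subset L(\Sigma)$, take the points $z_k=e^{i\theta}(R+k)$ for $k\ge1$. Each $z_k$ lies in $\Sigma$, since its imaginary coordinate is $y=0$. We have $|z_k|=R+k\to\infty$, and $\arg z_k=\theta$ for every $k$. Hence $\theta\in L(\Sigma)$.

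For $L(\Sigma)\subset\{\theta\}$, suppose $z_k=e^{i\theta}(x_k+iy_k)\in\Sigma$ with $|z_k|\to\infty$ and $\arg z_k\to\varphi$. Since $|z_k|^2=x_k^2+y_k^2$ and $|y_k|<a$, we get $x_k\to\infty$. Then $\arg z_k-\theta=\arctan(y_k/x_k)$ modulo $2\pi$. Because $|y_k/x_k|<a/x_k\to0$, this difference tends to $0$ in $\T$. Therefore $\varphi=\theta$.

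Combining the two inclusions gives $L(\Sigma)=\{\theta\}$. There is no real obstacle here. The only point needing care is that arguments are compared in $\T$, and that is handled by using the continuous branch $\arctan(y/x)\in(-\pi/2,\pi/2)$, which is valid because $x>0$ on $\Sigma$.
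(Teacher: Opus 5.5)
Your proposal is correct and matches the paper, which gives no separate proof and only remarks that the lemma follows directly from the definitions. Your two-inclusion check (the points $e^{i\theta}(R+k)$ for one direction, and $x_k\to\infty$ with $|\arctan(y_k/x_k)|<a/x_k\to 0$ for the other) is exactly that direct verification.
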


For an entire function $f$, its fast escaping set is defined by
\[
 A(f):=\{z:\text{there exists }L\in\mathbb N\text{ such that }
          |f^{n+L}(z)|\geq M^n(R,f)\text{ for every }n\in\mathbb N\}.
\]
Here $M^n(r,f)$ denotes iteration of $r\mapsto M(r,f)$, and $R>0$
may be any value for which $M^n(R,f)\to\infty$ as $n\to\infty$.

With the notation of the radial distribution $L(X)$ of $X$, we set 
\[
 \Lcal_I(f):=L(I(f)),\qquad
 \Lcal_A(f):=L(A(f)).
\]
If \(f\in\Bcal\), then
\(
A(f)\subset I(f)\subset \J(f),
\)
and \(A(f)\) is dense in \(\J(f)\) (see \cite[Theorem 1]{EL} and \cite[Theorem~5.1]{RipponStallard}). We therefore obtain the following result.

\begin{proposition}\label[proposition]{prop:direction-equality-B}
\(
                 \Lcal_A(f)=\Lcal_I(f)=\Lcal(f)
\) for $f\in\Bcal$.
\end{proposition}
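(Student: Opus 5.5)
The plan is to prove both containments $\Lcal_A(f)\subset\Lcal_I(f)\subset\Lcal(f)$ and $\Lcal(f)\subset\Lcal_A(f)$, using only monotonicity of $L(\cdot)$, the two facts quoted just before the statement, and the fact that $L$ only sees limits of points tending to $\infty$.

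\emph{Step 1: monotonicity.} Directly from the definition, $X\subset Y$ implies $L(X)\subset L(Y)$, whenever both sets are unbounded. The set $A(f)$ is unbounded: it is nonempty for transcendental entire $f$ (Rippon--Stallard), and it is completely invariant. Alternatively, unboundedness follows from density in the unbounded set $\J(f)$, which is the argument below. Since $A(f)\subset I(f)\subset\J(f)$ for $f\in\Bcal$ by \cite[Theorem 1]{EL}, we get $\Lcal_A(f)\subset\Lcal_I(f)\subset\Lcal(f)$.

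\emph{Step 2: the reverse inclusion via density.} Let $\theta\in\Lcal(f)$, so there are $z_k\in\J(f)$ with $|z_k|\to\infty$ and $\arg z_k\to\theta$. Since $A(f)$ is dense in $\J(f)$ \cite[Theorem~5.1]{RipponStallard}, choose $w_k\in A(f)$ with $|w_k-z_k|<1$. Then $|w_k|\ge|z_k|-1\to\infty$. Moreover
\[
|\arg w_k-\arg z_k|\le C\,\frac{|w_k-z_k|}{|z_k|}\le \frac{C}{|z_k|}\to0
\]
modulo $2\pi$ for $|z_k|\geq 2$. Hence $\arg w_k\to\theta$, so $\theta\in L(A(f))=\Lcal_A(f)$. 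This gives $\Lcal(f)\subset\Lcal_A(f)$, and combined with Step 1, all three sets are equal.

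I do not expect a real obstacle. The only point needing care is the uniform argument estimate in Step 2. The perturbation is of fixed size $1$ while the modulus tends to infinity, so the change in argument is $O(1/|z_k|)$. This also shows that $A(f)$ is unbounded, so $\Lcal_A(f)$ is well defined and nonempty by \Cref{lem:LD-compact}.
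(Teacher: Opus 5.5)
Your proposal is correct and follows the same route as the paper. The paper combines $A(f)\subset I(f)\subset\J(f)$ for $f\in\Bcal$ (Eremenko--Lyubich) with the density of $A(f)$ in $\J(f)$ (Rippon--Stallard), and you additionally make explicit the elementary step that a bounded perturbation of points tending to infinity does not change their limiting arguments.
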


The next proposition which was suggested by Walter Bergweiler,  allows us to obtain Julia limiting directions from
sequences on which the function tends to infinity.

\begin{proposition}\label[proposition]{prop:large-values-julia}
Let $f\in\Bcal$, and let $\{z_n\}$ be a sequence tending to infinity and
$|f(z_n)|\to\infty$ as $n\to\infty$. Then
\begin{equation}\label{eq:large-values-proximity}
 \frac{\dist(z_n,\J(f))}{|z_n|}\longrightarrow0,\quad \text{as}\,\,n\to\infty.
\end{equation}
In particular, if $\arg z_n\to\theta$, then $\theta\in\Lcal(f)$.
\end{proposition}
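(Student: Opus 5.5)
We argue by contradiction, using the logarithmic change of variable together with the standard expansion estimate of Eremenko and Lyubich for functions in $\Bcal$. Suppose that, after passing to a subsequence, there is $\delta>0$ with $D(z_n,\delta|z_n|)\subset\F(f)$ for all $n$.

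\emph{Step 1: set-up.} Fix $R>0$ so large that $S(f)\cup\{f(0)\}\subset D(0,R)$, and put $W=\{w:|w|>R\}$. Every component $U$ of $f^{-1}(W)$ is then a logarithmic tract. If $\log$ denotes a branch on a simply connected domain, then $F:=\log f$ maps $U$ conformally onto the half plane $\{\operatorname{Re}\zeta>\log R\}$. Since $|f(z_n)|\to\infty$, for large $n$ the point $z_n$ lies in some tract $U_n$.

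\emph{Step 2: the expansion estimate.} Apply the Koebe $1/4$-theorem to the inverse branch of $F$ on the disk centred at $F(z_n)$ of radius $\operatorname{Re}F(z_n)-\log R$. Combined with the fact that tracts omit a fixed disk around $0$, this gives the Eremenko--Lyubich inequality
\[
 |z_n|\,\frac{|f'(z_n)|}{|f(z_n)|}\ \ge\ \frac{1}{4\pi}\bigl(\log|f(z_n)|-\log R\bigr)\longrightarrow\infty .
\]

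\emph{Step 3: the contradiction.} The disk $D_n=D(z_n,\delta|z_n|)$ lies in a Fatou component $V_n$. If $V_n$ is not a wandering domain, or more generally if the family $\{f^k\}$ is normal on $D_n$, then $f|_{D_n}$ is certainly normal. I would instead use a cleaner route. By Koebe, $F(U_n\cap D_n)$ contains a disk about $F(z_n)$ of radius
\[
 \gtrsim \min\bigl(\delta|z_n|\,|F'(z_n)|,\ \operatorname{Re}F(z_n)-\log R\bigr).
\]
Both quantities tend to infinity by Step 2; the first by Step 2 directly, the second because $|f(z_n)|\to\infty$. Taking $n$ large, this disk has radius larger than $\pi$, so $f(D_n)$ covers an annulus $\{r_1<|w|<r_2\}$ with $r_2/r_1\to\infty$. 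In particular $f(D_n)$ contains a round circle $\{|w|=r\}$ with $r$ arbitrarily large.

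Now use $I(f)\subset\J(f)$ together with the fact that $\J(f)$ meets every circle $\{|w|=r\}$ for large $r$. This last fact follows because Fatou components of $f\in\Bcal$ contain no escaping points and $\J(f)$ is unbounded. More precisely, it suffices that $f(D_n)\subset\F(f)$, which is invariant, and that $f(D_n)$ contains a large annulus of modulus tending to infinity. For $f\in\Bcal$, a Fatou component cannot contain annuli of arbitrarily large modulus surrounding $0$ far out: such annuli would force a multiply connected Fatou component, which do not exist for $f\in\Bcal$ since these would consist of escaping points, contradicting $I(f)\subset\J(f)$. This is the contradiction. The final statement about $\theta$ then follows: choose $w_n\in\J(f)$ with $|w_n-z_n|=o(|z_n|)$, so $\arg w_n-\arg z_n\to0$ and $|w_n|\to\infty$.

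\emph{Main obstacle.} The delicate step is the last one: ruling out that an invariant family of Fatou components contains annuli $A_n$ of modulus tending to infinity separating $0$ from $\infty$. I would handle it via a hyperbolic metric argument. Since $\J(f)$ is unbounded and contains a point of modulus less than $R$, namely a repelling periodic point inside a bounded set, any annulus in $\F(f)$ separating that point from points of $\J(f)$ further out must have bounded modulus. If this fails, the fallback is to apply Step 2 to a second iterate: pick $w\in A_n$ with $|f(w)|$ large and iterate, so that the expansion shows the disk contains points of $I(f)$.
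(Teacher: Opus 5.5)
Your argument is correct, but it takes a different route from the paper.

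The paper argues directly and pointwise. For $z\in\F(f)$ with $d=\dist(z,\J(f))$, it applies Schwarz--Pick to $f\colon D(z,d)\to V$, where $V$ is the simply connected Fatou component containing $f(D(z,d))$. It combines this with $\lambda_V(w)\ge 1/(2\dist(w,\partial V))$ and with $\dist(f(z),\partial V)\le|f(z)-a_0|$ for a fixed $a_0\in\J(f)$, which gives $|zf'(z)/f(z)|\le (4|z|/d)\bigl(1+|a_0|/|f(z)|\bigr)$. Set against the Eremenko--Lyubich lower bound, this yields the explicit estimate $d/|z|\le 16\pi\bigl(1+|a_0|/|f(z)|\bigr)/\log(|f(z)|/R)$.

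You work inside the logarithmic tract instead. The Eremenko--Lyubich expansion and the Koebe growth theorem for $F^{-1}$ on $D(F(z_n),\operatorname{Re}F(z_n)-\log R)$ show that $F(U_n\cap D_n)$ contains a disk about $F(z_n)$ whose radius tends to infinity. Hence $f(D_n)\subset\F(f)$ contains the full circle $\{|w|=|f(z_n)|\}$. Once $|f(z_n)|>\dist(0,\J(f))$, this circle surrounds a point of $\J(f)$, so its Fatou component is multiply connected. Baker's theorem together with $I(f)\subset\J(f)$ rules this out.

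Both proofs use the same two inputs: the expansion estimate and the simple connectivity of Fatou components for $f\in\Bcal$. The paper uses the latter through the hyperbolic metric and obtains a rate with no subsequence argument. Yours uses it topologically and is quite transparent. It can also be made quantitative: the covered disk must have radius at most $\pi$, which gives $\dist(z,\J(f))/|z|=O\bigl(1/\log(|f(z)|/R)\bigr)$, the same order as the paper.

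Your closing ``Main obstacle'' paragraph is unnecessary. A single circle in the Fatou set surrounding a Julia point already gives the contradiction, so no control of moduli of annuli is needed. The assertion there that $\J(f)$ has a point of modulus less than $R$ is not justified for your choice of $R$, and it is not needed: only $\J(f)\neq\varnothing$ and $|f(z_n)|\to\infty$ enter.
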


\begin{proof}
Choose
$$R>\max\left\{|f(0)|,\sup_{s\in S(f)}|s|\right\}.$$ 
The logarithmic derivative estimate of Eremenko and Lyubich \cite[Lemma~1]{EL} gives
\begin{equation}\label{eq:EL-log-derivative}
 \left|\frac{zf'(z)}{f(z)}\right|
 \geq\frac1{4\pi}\log\frac{|f(z)|}{R}
 \qquad\text{when }|f(z)|>R.
\end{equation}
Let
$z\in\F(f)$ and $d=\dist(z,\J(f))$. Consider the Fatou
component $V$ containing $f(D(z,d))$. As $f\in\Bcal$, it follows from \cite[Proposition~3]{EL} that $V$ is simply connected. By the Schwarz--Pick and Koebe distortion
theorems, we have
\[
 \lambda_V(f(z))|f'(z)|\leq\frac2d\qquad\text{and}\qquad
 \lambda_V(f(z))\geq
 \frac1{2\dist(f(z),\partial V)}.
\]
Let $a_0\in\J(f)$. Then
$$\dist(f(z),\partial V)\leq|f(z)-a_0|,$$ and hence,
\[
 \left|\frac{zf'(z)}{f(z)}\right|
 \leq\frac{4|z|}{d}
       \left(1+\frac{|a_0|}{|f(z)|}\right).
\]
Combining this inequality with \eqref{eq:EL-log-derivative} yields, for $|f(z)|>R$,
\begin{equation*}
 \frac{d}{|z|}=\frac{\dist(z,\J(f))}{|z|}
 \leq4\left|\frac{f(z)}{zf'(z)}\right|\left(1+\frac{|a_0|}{|f(z)|}\right)\le
 \frac{16\pi}{\log(|f(z)|/R)}\left(1+\frac{|a_0|}{|f(z)|}\right).
\end{equation*}
The same estimate is immediate when $z\in\J(f)$, so
\eqref{eq:large-values-proximity} follows.

Choose $\xi_n\in\J(f)$ such that
\[
|\xi_n-z_n|=\dist(z_n,\J(f)).
\]
By \eqref{eq:large-values-proximity}, we have
\[
\left|\frac{\xi_n}{z_n}-1\right|
=\frac{\dist(z_n,\J(f))}{|z_n|}
\longrightarrow 0.
\]
Hence, $|\xi_n|\to\infty$ and
\[
\arg\xi_n-\arg z_n\to 0\ (\text{mod}\ 2\pi).
\]
Thus, if $\arg z_n\to\theta$, then
$\arg\xi_n\to\theta$, proving that $\theta\in \Lcal(f)$.
\end{proof}

For $\theta_1,\theta_2\in\T$, define
\[
d_{\T}(\theta_1,\theta_2)
:=\min_{k\in\mathbb Z}|\theta_1-\theta_2+2\pi k|.
\] This expression is independent of the representatives of the two angles.

A family \(\mathcal U\) of subsets of \(\mathbb C\) is called locally
finite  if
\[
\#\{\Sigma\in\mathcal U:K\cap\Sigma\neq\varnothing\}<\infty
\]
for every compact set \(K\subset\mathbb C\).
The following observation will be used for the half strip models.
\begin{lemma}\label[lemma]{lem:compact-strips}
Let $E\subset\T$ be nonempty, compact, and infinite. Then, there are distinct
points $\theta_n\in E$ and pairwise disjoint half strips
\(
 \Sigma_n=e^{i\theta_n}\{x+iy:x>R_n,\ |y|<1\}\)
 with \(R_n\to\infty,
\)
such that the family $\{\Sigma_n\}$ is locally finite, and
\begin{equation*}
        L\left(\bigcup_{n\geq1}\Sigma_n\right)=E.
\end{equation*}
\end{lemma}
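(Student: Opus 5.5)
We choose a countable dense sequence of directions in $E$ in which each direction is repeated with suitable multiplicity, and then place the strips far enough out that they are disjoint. Since $E$ is compact and infinite, it contains a countable dense subset $D=\{\phi_1,\phi_2,\dots\}$ of distinct points; we take $D$ infinite, which is possible because $E$ is infinite. We want $\theta_n$ to be distinct, so we simply set $\theta_n=\phi_n$. Density of $\{\theta_n\}$ in $E$ will then give $E\subset L(\bigcup\Sigma_n)$, provided each $\theta_n$ lies in $L(\Sigma_n)$. This holds by \Cref{lem:one-strip}, since $L(\Sigma_n)=\{\theta_n\}$ and $L$ of a union contains $L$ of each member. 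Since $L(X)$ is closed, we get $\overline{\{\theta_n\}}=E\subset L(\bigcup\Sigma_n)$.

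For the reverse inclusion, let $z_k\in\bigcup\Sigma_n$ with $|z_k|\to\infty$ and $\arg z_k\to\theta$. Suppose $z_k\in\Sigma_{n_k}$. If $n_k$ stays bounded along a subsequence, some fixed $n$ occurs infinitely often, and then \Cref{lem:one-strip} gives $\theta=\theta_n\in E$. Otherwise $n_k\to\infty$. Every point $z$ of $\Sigma_n$ satisfies $|z|>R_n$ and
\[
\sin d_{\T}(\arg z,\theta_n)\le 1/|z|,
\]
so $d_{\T}(\arg z,\theta_n)\le \arcsin(1/R_n)$ once $R_n>1$. Hence $d_{\T}(\theta_{n_k},\theta)\to 0$, and $\theta\in E$ because $E$ is closed.

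Local finiteness follows immediately from $R_n\to\infty$, since $\Sigma_n\subset\{|z|>R_n\}$ and a compact $K$ lies in some disc $\{|z|\le r\}$. So the real work is choosing $R_n$ inductively to make the strips pairwise disjoint, and this is the main (though mild) obstacle. We use the fact that for distinct $\theta_n,\theta_m$ with angular distance $\delta=d_{\T}(\theta_n,\theta_m)>0$, two width-$2$ strips in these directions are disjoint outside the disc of radius about $2/\sin(\delta/2)$ when $\delta<\pi$. For $\delta$ near $\pi$ the strips are anti-parallel and are disjoint as soon as both $R$'s exceed $2$.

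More precisely, $\Sigma_m\subset\{|z|>R_m\}$, and every point of $\Sigma_m$ lies within angular distance $\arcsin(1/|z|)$ of $\theta_m$. If $|z|>R$ with $2\arcsin(1/R)<\delta$, then $z$ cannot lie in both $\Sigma_n$ and $\Sigma_m$. So we choose $R_n$ inductively with
\[
R_n>\max\{n,\ R_{n-1},\ 2\}\quad\text{and}\quad 2\arcsin(1/R_n)<\min_{m<n} d_{\T}(\theta_n,\theta_m).
\]
The minimum is positive because the $\theta_m$ are distinct. For $m<n$, any common point $z$ would satisfy $|z|>R_n$, hence
\[
d_{\T}(\theta_n,\theta_m)\le 2\arcsin(1/|z|)<2\arcsin(1/R_n),
\]
which contradicts the choice of $R_n$. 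Hence the strips are pairwise disjoint, and $R_n\ge n$ gives $R_n\to\infty$, completing the construction.
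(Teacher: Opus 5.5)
Your proof is correct and follows essentially the same route as the paper. The paper also takes a dense sequence of distinct directions, chooses $R_n$ inductively so that the strips are disjoint, gets local finiteness from $R_n\to\infty$, and splits into bounded versus unbounded $n_k$ to show $L(\bigcup\Sigma_n)\subset E$. The differences are cosmetic: you make the paper's remark that "rays separate linearly at infinity" explicit through the criterion $2\arcsin(1/R_n)<\min_{m<n}d_{\T}(\theta_n,\theta_m)$ (valid because $x>0$ keeps the angular deviation below $\pi/2$), and you obtain $E\subset L(\bigcup\Sigma_n)$ from the closedness of $L$ rather than by exhibiting the points $(R_{n_k}+k)e^{i\theta_{n_k}}$.
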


\begin{proof}
Choose a countable dense subset $\{\theta_n:n\geq1\}$ of $E$ with distinct
elements. Inductively choose $R_n\geq n+2$ so large that $\Sigma_n$ is
disjoint from $\Sigma_1,\ldots,\Sigma_{n-1}$. This is possible because two
rays with distinct arguments separate linearly at infinity. Since every
point of $\Sigma_n$ has modulus at least $R_n$, the condition $R_n\to\infty$
implies local finiteness.

We take a sequence $\{z_k\}$ with $z_k\in\bigcup_n\Sigma_n$ and $|z_k|\to\infty$. If infinitely many
$z_k$ lie in one fixed $\Sigma_n$, every corresponding angular limit is
$\theta_n\in E$. Otherwise, after passing to a subsequence if necessary, we may assume that $z_k\in\Sigma_{n_k}$ with
$n_k\to\infty$. Then,
\[
 d_{\T}(\arg z_k,\theta_{n_k})
 \leq\arctan\frac{1}{R_{n_k}}\longrightarrow0.
\]
Since $\theta_{n_k}\in E$ and $E$ is closed, every limiting direction of $(z_k)$ belongs to $E$.
Hence, $$L\left(\bigcup_{n\geq1}\Sigma_n\right)\subset E.$$

Now, let $\theta\in E$. Since $\{\theta_n:n\geq 1\}$
is dense in $E$, we may choose a subsequence $\{\theta_{n_k}\}$ such that
$\theta_{n_k}\to\theta$ in $\T$. Clearly, the points $$z_k:=(R_{n_k}+k)e^{i\theta_{n_k}}\in\Sigma_{n_k},$$ satisfy $|z_k|\to\infty$ and
$\arg z_k\to\theta$ (mod $2\pi$). Hence
\[
\theta\in L\left(\bigcup_{n\geq 1}\Sigma_n\right).
\]
This completes the proof of \Cref{lem:compact-strips}.
\end{proof}

\section{Rempe's approximation models}\label{sec:Remp}
In this section, we introduce Rempe's approximation theorem and apply it to construct entire functions in \(\mathcal B\) with logarithmic tracts contained in prescribed half strips. By taking a suitably weighted sum of the constructed entire functions, we obtain a concrete control over the preimages of sufficiently large values. This construction will be used in Section~\ref{sec:inv} to prove \Cref{thm:compact-main}.

Recall that a \emph{logarithmic tract of $f$ over $\mathbb{C}\setminus \overline{D(0, R)}$} is a
component $V$ of $f^{-1}(\mathbb{C}\setminus \overline{D(0, R)})$ for which
\[
              f:V\longrightarrow \mathbb{C}\setminus \overline{D(0, R)}
\]
is a universal covering.

Let
\(
 \widehat H:=\{x+iy:x>-14\log^+|y|\}.
\)
The main ingredients in the proof of
\Cref{thm:compact-main} are the following
two results of Rempe \cite{Rempe2014}.

\begin{theorem}[{\cite[Theorem 1.7]{Rempe2014}}]\label[theorem]{thm:rempe}
Let $T\subset\C$ be an unbounded simply connected domain and let
$\Psi:T\to\widehat H$ be a conformal isomorphism satisfying
\begin{equation}\label{eq:model-end}
 \Psi(z_k)\to\infty\text{ in }\widehat H
 \quad\Longrightarrow\quad
 z_k\to\infty\text{ in } T.
\end{equation}
Set $g:=\exp\circ\Psi$. Then there exists an entire function
$G\in\Bcal$ such that, as $z\to\infty$,
\begin{equation*}
G(z)=
\begin{cases}
 g(z)+O(1/z),\quad &z\in T,\\[4pt]
 O(1/z),\quad &z\notin T.
 \end{cases}
\end{equation*}
\end{theorem}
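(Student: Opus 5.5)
Since this is Rempe's approximation theorem, which the paper quotes from \cite{Rempe2014}, I would follow the Cauchy-integral strategy used there. The construction is to define $G$ by integrating $g=\exp\circ\Psi$ along the boundary of the tract. The boundary of $\widehat H$ is the curve $x=-14\log|y|$ for $|y|\ge 1$, together with the segment $x=0$, $|y|\le1$. On this curve $|\exp(\zeta)|=e^{x}=|y|^{-14}$, so $g$ decays like a high negative power of $|\operatorname{Im}\Psi|$ along $\partial T$.

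\emph{Step 1: convergence of the integral.} I would set
\[
G(z):=\frac{1}{2\pi i}\int_{\partial T}\frac{g(w)}{w-z}\dd w\quad (z\notin\partial T),\qquad G:=g+\frac{1}{2\pi i}\int_{\partial T}\frac{g(w)}{w-z}\dd w\ \text{ on }T,
\]
with $\partial T$ oriented so that $T$ lies to its left. For convergence I need $\int_{\partial T}|g(w)|\,|\dd w|<\infty$. Since $\partial T$ need not be rectifiable, I would first replace $\partial T$ by the preimage under $\Psi$ of a slightly shifted curve $\partial\widehat H+c$ inside $\widehat H$. I would then use Koebe distortion for $\Psi^{-1}$, together with the end condition \eqref{eq:model-end}, to bound $|(\Psi^{-1})'|$ by a polynomial in $|\zeta|$. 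The factor $|y|^{-14}$ beats this polynomial growth, so the integral converges absolutely. The choice of the exponent $14$ is exactly what makes this bookkeeping work.

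\emph{Step 2: $G$ is entire.} By the Sokhotski--Plemelj jump relation, the jump of the Cauchy integral across the contour equals $g$ there. This jump is cancelled by adding $g$ on $T$, so $G$ extends to an entire function. Because the contour can be moved freely inside the region where $g$ is small, the definition does not depend on that choice.

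\emph{Step 3: the error is $O(1/z)$.} For $z$ at distance at least $1$ from the contour, expand
\[
\frac1{w-z}=-\frac1z+\frac{w}{z(w-z)}.
\]
This reduces the claim to a weighted bound $\int|w|\,|g(w)|\,|\dd w|<\infty$, which the same decay gives. For $z$ close to the contour, I would deform the contour to avoid $z$ by a definite amount, using the shift freedom from Step 2. This yields $G=g+O(1/z)$ on $T$ and $G=O(1/z)$ off $T$.

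\emph{Step 4: $G\in\Bcal$.} This is the step I expect to be the main obstacle. Choose $R$ large. Outside $T$ and near $\partial T$ we have $|G|\le C$. Deep inside $T$ we have $G=g(1+o(1))$, where $g$ is a universal covering of $\{|w|>1\}$ from the part of $T$ above $\operatorname{Re}\Psi>0$. I would then show:
\begin{itemize}
\item $G'\neq0$ wherever $|G|>R$. To see this, write $G=\exp(\Psi+h)$ with $h$ small there, and use Cauchy estimates on $\Psi$-hyperbolic disks to get $h'=o(|\Psi'|)$.
\item Every component of $G^{-1}(\{|w|>R\})$ is mapped by $\log G$ conformally onto a half-plane. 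This follows from a Rouché and argument-principle comparison with $\Psi$.
\end{itemize}
These two facts give that there are no critical or asymptotic values of modulus greater than $R$, so $S(G)$ is bounded and $G\in\Bcal$.

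The delicate point throughout is the uniform control of $h$ near the boundary of the region $\{|g|>R\}$, where $g$ and the $O(1/z)$ error are comparable only in relative terms. I would handle this by working in the $\Psi$-coordinates, where the error is small compared to $\exp(\operatorname{Re}\zeta)$ once $\operatorname{Re}\zeta$ exceeds a fixed constant.
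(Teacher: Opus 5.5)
\textbf{Approach.} Your route matches the standard Cauchy-integral proof behind the cited result; the paper only quotes it. Write $\Phi$ for the Cauchy integral of $g$ over $\Gamma_c=\Psi^{-1}(\partial\widehat H+c)$. Convergence follows from $|e^{\zeta}|\le e^{c}|\operatorname{Im}\zeta|^{-14}$ on $\partial\widehat H+c$, played against a Koebe/growth bound $|(\Psi^{-1})'(\zeta)|\lesssim|\zeta|^{4+\epsilon}$ there. Your Step~4 is just Proposition~\ref{prop:stability} applied to $q=G/A$ and $\psi=\Psi-\log A$ on $\{\operatorname{Re}\Psi>\log A\}$.

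\textbf{The gap.} The unjustified step is the sentence ``This yields $G=g+O(1/z)$ on $T$''. Consider points of $T$ whose $\Psi$-image lies within distance $c$ of $\partial\widehat H$. They can only be placed inside a contour $\Gamma_{c'}$ with $c'\to0$, and then your constants blow up. For a fixed contour they lie outside it, so there $G=\pm\Phi$, and your estimates give $G=O(1/z)$. On this collar your claim is therefore equivalent to $|g(z)|=O(1/|z|)$, i.e.\ roughly $|z|\lesssim|\operatorname{Im}\Psi(z)|^{14}$ up to $\partial T$. Nothing in your argument yields this: Koebe bounds for $\Psi^{-1}$ degenerate at $\partial\widehat H$, and \eqref{eq:model-end} is purely qualitative.

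In fact, for general $T$ this bound is false. Let $T$ be the right half-plane with thin channels $\{x+iy:-e^n<x\le 0,\ |y-n|<1/10\}$ attached. Then $T$ is a Jordan domain through $\infty$, so \eqref{eq:model-end} holds. Comparing harmonic measure with the half-plane, and using the Beurling estimate in $\widehat H$, one finds that $\Psi$ maps the walls of the $n$th channel into $\partial\widehat H\cap\{|\operatorname{Im}\zeta|\lesssim n^{2}\}$. Hence $|g|\ge n^{-O(1)}$ on these walls, although they reach $|z|\approx e^{n}$. Since $G$ is continuous, approaching such a wall point $p$ from inside and from outside $T$ gives
\[
n^{-O(1)}\le|G(p)|+O(e^{-n})=O(e^{-n}),
\]
which is absurd. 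So the statement, read literally for arbitrary $T$, cannot be proved by any argument.

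\textbf{What your argument does give.} For each $c>0$ you get $G=g+O(1/z)$ on $\Psi^{-1}(\widehat H+c)$ (the translate by $c$), $G=O(1/z)$ off $T$, and $G=O(1)$ on the collar. This weaker form is all the proof of Theorem~\ref{thm:weighted} uses, since there it only needs $|F|\le A+C$ on $\Sigma_n\setminus U_n$. For the half strips $\Sigma_n$ used in that theorem the full form also holds, because $|\Psi(z)|$ grows exponentially in $|z|$ on $\Sigma_n$. A general statement needs an extra quantitative hypothesis of this kind.

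\textbf{Two smaller repairs.}
\begin{enumerate}
\item With $T$ to the left of the contour, the Plemelj jump is $\Phi_{\mathrm{in}}-\Phi_{\mathrm{out}}=g$. The entire extension is therefore $g-\Phi$ inside and $-\Phi$ outside, not $g+\Phi$; alternatively, reverse the orientation.
\item In Step~3 the ``definite distance'' from the contour must be measured in the $\zeta$-plane, because in thin tracts every $\Gamma_{c'}$ is Euclidean-close to $z$. Koebe's quarter theorem then gives $|\Psi^{-1}(\zeta')-z|\ge\frac{\delta}{4}|(\Psi^{-1})'(\zeta')|$ whenever $D(\zeta',\delta)\subset\widehat H$ and $z\notin\Psi^{-1}(D(\zeta',\delta))$. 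The Jacobian then cancels in $|\dd w|/|w-z|$.
\end{enumerate}
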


\begin{proposition}[{\cite[Proposition~2.2]{Rempe2014}}]\label[proposition]{prop:stability}
Let $U$ be an unbounded simply connected domain and $\psi:U\to\Hh$ be conformal, put $h=e^\psi$, and assume that
$\psi(z_k)\to\infty$ implies $z_k\to\infty$. Suppose that
$q:U\to\C\setminus\{0\}$ is holomorphic and
\[
                         |q-h|\leq1\quad\text{on }U.
\]
Then
\[
 U^0:=\{z\in U:|q(z)|>2\}
\]
is simply connected and
\[
 q:U^0\longrightarrow\{w:|w|>2\}
\]
is a universal covering.
\end{proposition}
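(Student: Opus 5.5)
The plan is to pass to logarithmic coordinates given by $\psi$ and show that a logarithm of $q$ is a conformal isomorphism of $U^0$ onto the half plane $\{\operatorname{Re} w>\log 2\}$. Once that is done, $U^0$ is simply connected, and $q=\exp\circ(\log q)$ is the composition of a conformal map with the universal covering $\exp:\{\operatorname{Re}w>\log2\}\to\{|w|>2\}$.

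\emph{Step 1 (a logarithm of $q$).} Work with $\zeta=\psi(z)\in\Hh$ and set $\tilde q:=q\circ\psi^{-1}$. Write
\[
\tilde q(\zeta)=e^\zeta\bigl(1+\varepsilon(\zeta)e^{-\zeta}\bigr),\qquad |\varepsilon|\le1 .
\]
Let $\Omega:=\psi(U^0)=\{\zeta\in\Hh:|\tilde q(\zeta)|>2\}$. On $\Omega$ we have $|e^\zeta|\ge|\tilde q|-1>1$, so the factor $1+\varepsilon e^{-\zeta}$ lies in the disc $D(1,1)$. Hence the principal logarithm gives a holomorphic function
\[
\varphi:=\operatorname{Log}\bigl(1+\varepsilon e^{-\zeta}\bigr)\quad\text{with}\quad |\operatorname{Im}\varphi|<\pi/2 .
\]
We then set $F(\zeta):=\zeta+\varphi(\zeta)$ on $\Omega$. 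This is a logarithm of $\tilde q$, so $F$ maps $\Omega$ into $P:=\{\operatorname{Re}w>\log2\}$.

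\emph{Step 2 (bounds on $\varphi$).} On $\Omega$ we have $\operatorname{Re}\varphi=\log|\tilde q|-\operatorname{Re}\zeta$. This is bounded above by $\log 2$. It is bounded below by $\log2-\operatorname{Re}\zeta$, and also by $\log(1-e^{-\operatorname{Re}\zeta})$, which tends to $0$ as $\operatorname{Re}\zeta\to\infty$. Hence $\varphi$ is bounded on $\Omega$ (by some $C$) and $\varphi\to0$ as $\operatorname{Re}\zeta\to\infty$.

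\emph{Step 3 (properness).} Let $\zeta_k$ leave every compact subset of $\Omega$. Either $\zeta_k\to\infty$, in which case $F(\zeta_k)\to\infty$ because $\varphi$ is bounded. Or $\zeta_k$ accumulates at a finite boundary point of $\Omega$. There $|\tilde q|=2$, because on $\partial\Hh$ we have $|e^\zeta|=1$ and so $|\tilde q|\le2$; consequently $\operatorname{Re}F(\zeta_k)\to\log2$. Either way $F(\zeta_k)$ leaves compact subsets of $P$, so $F:\Omega\to P$ is proper. (The hypothesis that $\psi(z_k)\to\infty$ forces $z_k\to\infty$ is what guarantees that "leaving compacts" means the same thing in $U$ and in $\Hh$; it is mostly bookkeeping here.)

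\emph{Step 4 (degree one).} A proper holomorphic map from each component of $\Omega$ onto the connected domain $P$ is surjective and has a constant finite degree. Now take $w\in P$ with $\operatorname{Re}w$ very large. Any preimage $\zeta$ satisfies $|\zeta-w|\le C$, so $\operatorname{Re}\zeta$ is large and $|\varphi|$ is small on the disc $\overline{D(w,C+1)}$, which is contained in $\Omega$. Rouché's theorem, comparing $F-w$ with $\zeta-w$ on this disc, gives exactly one preimage, counted with multiplicity. Hence $\Omega$ has exactly one component and $F$ has degree $1$, i.e.\ $F$ is a conformal isomorphism $\Omega\to P$. Pulling back by $\psi$, the map $Q:=F\circ\psi$ is a conformal isomorphism $U^0\to P$ with $e^{Q}=q$, which proves both claims.

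The main obstacle is Steps 2--3, namely controlling $\varphi$ near $\partial\Omega$ where $|e^\zeta|$ is close to $1$. There $1+\varepsilon e^{-\zeta}$ can approach $0$, and one must see that $|\tilde q|>2$ keeps its modulus bounded below. This is what makes $F$ both well defined and proper.
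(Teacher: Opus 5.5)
The paper gives no proof of this proposition; it quotes it from Rempe's paper, so there is no in-paper argument to compare against. Your proof is correct and complete, and it needs only minor tidying.

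\emph{How the steps close.} The estimate $|\tilde q(\zeta)e^{-\zeta}-1|\le e^{-\operatorname{Re}\zeta}<1$ on $\Omega$ gives the logarithm $F=\zeta+\varphi$ directly. Use your two lower bounds for $\operatorname{Re}\varphi$ on $\operatorname{Re}\zeta\le\log 3$ and on $\operatorname{Re}\zeta\ge\log 3$ respectively. Together they give $\log(2/3)\le\operatorname{Re}\varphi<\log 2$. With $|\operatorname{Im}\varphi|<\pi/2$ this yields $|\varphi|<2$, so you can take $C=2$. Properness then follows as you say. State explicitly why the Rouch\'e disc lies in $\Omega$: on $\{\zeta:\operatorname{Re}\zeta>\log 3\}$ one has $|\tilde q|\ge e^{\operatorname{Re}\zeta}-1>2$, so this half plane is contained in $\Omega$. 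Then Rouch\'e on $\overline{D(w,C+1)}$ with $\operatorname{Re}w>\log 3+C+1$ gives exactly one preimage, hence one component and degree one. Rouch\'e only needs $|\varphi|\le C<C+1$ on the circle, so the decay $\varphi\to0$ is never used.

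\emph{Small corrections.} First, at accumulation points on $\partial\Hh$ the function $\tilde q$ is not defined. Phrase that step as $\limsup_k|\tilde q(\zeta_k)|\le\lim_k\bigl(e^{\operatorname{Re}\zeta_k}+1\bigr)=2$.

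Second, your parenthetical in Step~3 gives the end condition on $\psi$ the wrong role. ``Leaving compacts'' corresponds in $U^0$ and $\Omega$ simply because $\psi|_{U^0}:U^0\to\Omega$ is a homeomorphism. In fact your argument shows the end condition plays no role in this conclusion at all: both simple connectivity and the universal covering property are carried over by $\psi$.

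\emph{Redundant hypothesis.} Zero-freeness of $q$ is automatic, since $|q|\ge|h|-1=e^{\operatorname{Re}\psi}-1>0$ on $U$. So the separate check in the proof of \Cref{thm:weighted} that $F$ has no zeros on $U_n$ is redundant.
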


For each half strip used below, its closure in the Riemann sphere, as well as the closure of $\widehat H$
is a Jordan domains. We choose the conformal
isomorphism so that \eqref{eq:model-end} holds, i.e, the prime end at infinity is sent to the prime end at
infinity by Caratheodory's theorem.
Thus, we give the following approximation models for proving \Cref{thm:compact-main}.
\begin{theorem}
\label[theorem]{thm:weighted}
Let $\mathcal I$ be a nonempty finite or countably infinite index set. Suppose that
$\{\Sigma_n:n\in\mathcal I\}$ is a pairwise disjoint, locally finite family
of half strips and
\[
               \Psi_n:\Sigma_n\longrightarrow\widehat H
\]
is a conformal isomorphism satisfying \eqref{eq:model-end}. Then there
exist a transcendental entire function $F\in\Bcal$, a number $A>0$, and
domains $V_n\subset\Sigma_n$ such that
\begin{equation}\label{eq:exact-large-preimage}
 F^{-1}(\{w:|w|>2A\})=\bigcup_{n\in\mathcal I}V_n,
\end{equation}
and
\begin{equation*}
 F:V_n\longrightarrow\{w:|w|>2A\}
\end{equation*}
is a universal covering for every $n\in\mathcal I$.
\end{theorem}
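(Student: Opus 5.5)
Apply \Cref{thm:rempe} to each half strip separately, form a weighted sum of the resulting functions, and then use \Cref{prop:stability} on each strip to identify the large preimage exactly. For each $n\in\mathcal I$, applying \Cref{thm:rempe} to $T=\Sigma_n$ and $\Psi_n$ gives $G_n\in\Bcal$ with $G_n=e^{\Psi_n}+O(1/z)$ on $\Sigma_n$ and $G_n=O(1/z)$ off $\Sigma_n$. The natural candidate is
\[
F=\sum_{n\in\mathcal I}\varepsilon_n G_n(z-c_n)\quad\text{or}\quad F=\sum_n \varepsilon_n G_n ,
\]
with weights $\varepsilon_n>0$ decaying fast enough that the series converges locally uniformly. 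Convergence is not automatic: the $O(1/z)$ terms are only asymptotic, and the $G_n$ are unbounded near their own strips. I would choose $\varepsilon_n$ so that $\varepsilon_n\max_{|z|\le R_n'}|G_n|\le 2^{-n}$, where $R_n'\to\infty$ is chosen so that the disk $D(0,R_n')$ is disjoint from $\Sigma_m$ for all $m\ge n$; this uses local finiteness together with $R_n\to\infty$. Outside the disk I would use the global bound $|G_n(z)|\le C_n/|z|$ for $z\notin\Sigma_n$, $|z|$ large, and $|G_n-e^{\Psi_n}|\le C_n/|z|$ on $\Sigma_n$. For a fixed compact set only finitely many strips meet it, so the remaining terms are uniformly summable and $F$ is entire.

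The key estimate is that near infinity $F$ is a small perturbation of the single model on each strip and bounded off all strips. Put $h_n=\varepsilon_n e^{\Psi_n}$ on $\Sigma_n$. I would arrange the weights, together with a preliminary truncation (replacing $\Sigma_n$ by a sub-half-strip and composing $\Psi_n$ with a translation so that $\operatorname{Re}\Psi_n$ is large there), so that
\[
|F-h_n|\le A\ \text{on }\Sigma_n,\qquad |F|\le A\ \text{on }\C\setminus\bigcup_n\Sigma_n,
\]
for a single constant $A$. The constant must absorb both the compact parts, where every $|G_n|$ is bounded and the sum converges, and the tails, where the $O(1/z)$ errors are summed with weights $\varepsilon_n$. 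The genuine difficulty is uniformity in $n$: the implied constants $C_n$ are uncontrolled. The fix is to choose $\varepsilon_n\le 2^{-n}/(1+C_n+\sup_{|z|\le R'_n}|G_n|)$, so the errors sum to a bounded quantity. The leading term $h_n$ still has modulus tending to infinity inside $\Sigma_n$, because $e^{\Psi_n}$ is unbounded there and the positive constant $\varepsilon_n$ does not change that.

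To conclude, rescale by setting $q=F/A$ and $\psi_n=\Psi_n+\log(\varepsilon_n/A)$. Using $\widehat H\supset\Hh$, restrict to $U_n=\Psi_n^{-1}(\Hh+\log(A/\varepsilon_n))\subset\Sigma_n$, so that $\psi_n$ maps $U_n$ conformally onto $\Hh$ and the end condition is inherited from \eqref{eq:model-end}. I expect the main obstacle to be the part of $\Sigma_n\setminus U_n$, where $|h_n|\le A$, so $|F|\le 2A$ there. Near $\partial U_n$ we have $|h_n|=A$, so $|F|\le 2A$ and the set $\{|F|>2A\}$ never reaches $\partial U_n$ from inside. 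Also $F$ has no zeros in $U_n$ wherever $|F|>2A$, and I would shrink $U_n$ further if needed so that $q\neq0$ on it. \Cref{prop:stability} then shows that $V_n=\{z\in U_n:|F|>2A\}$ is simply connected and that $F:V_n\to\{|w|>2A\}$ is a universal covering. Since $|F|\le 2A$ on $\C\setminus\bigcup U_n$, equation \eqref{eq:exact-large-preimage} follows. Finally, $F\in\Bcal$ because every singular value has modulus at most $2A$: the regions where $|F|>2A$ are covered by coverings. $F$ is transcendental since $\mathcal I$ is nonempty and it has a logarithmic tract.
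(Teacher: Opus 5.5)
Your proposal is correct and is essentially the paper's proof: Rempe's theorem on each strip, weights with $\sum_n\varepsilon_nC_n<\infty$, the bounds $|F-\varepsilon_ne^{\Psi_n}|\leq C$ on $\Sigma_n$ and $|F|\leq C$ off the strips, and \Cref{prop:stability} applied to $q=F/A$ on $U_n=\{\varepsilon_n|e^{\Psi_n}|>A\}$; the paper simply takes $\varepsilon_n=2^{-n}/C_n$ with $C_n$ a global constant relative to $|z|^+=\max\{1,|z|\}$, which makes your extra disk-supremum term unnecessary. Your hedges can be dropped: no truncation of $\Sigma_n$ is needed, and $q\neq0$ on $U_n$ is automatic since $|F|\geq|h_n|-|F-h_n|>A-A=0$ there, so you should not shrink $U_n$, as doing so could destroy the exact identity \eqref{eq:exact-large-preimage}.
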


\begin{proof}
Let $g_n:=e^{\Psi_n}$. By \Cref{thm:rempe}, for each $n$, 
there exist an entire function $G_n\in\Bcal$ and a
constant $C_n\geq 1$ such that
\begin{equation}\label{eq:global-out}
\begin{cases}
|G_n(z)-g_n(z)|\leq \dfrac{C_n}{|z|^+},
    & z\in\Sigma_n,\\[6pt]
|G_n(z)|\leq \dfrac{C_n}{|z|^+},
    & z\notin\Sigma_n,
\end{cases}
\end{equation}
where $|z|^+:=\max\{1,|z|\}$.

Choose $0<\varepsilon_n\leq1$ (one may take
$\varepsilon_n=2^{-n}/C_n$) so that
\begin{equation}\label{eq:weight-sum}
                   C:=\sum_{n\in\mathcal I}\varepsilon_nC_n<\infty.
\end{equation}
Define
\[
                         F:=\sum_{n\in\mathcal I}\varepsilon_nG_n.
\]
 For any compact $K\subset\C$, the local finiteness of the family $\{\Sigma_n\}_n$ implies that 
\[
 \mathcal I_K:=\{n:K\cap\Sigma_n\neq\varnothing\}
\]
is finite. Thus, $$\sum_{n\in\mathcal I_K}\varepsilon_nG_n<\infty,$$
while for
$n\notin\mathcal I_K$, \eqref{eq:global-out} gives
\[
        \sup_{z\in K}|\varepsilon_nG_n(z)|\leq\varepsilon_nC_n.
\]
The remaining sum converges uniformly on $K$ by
\eqref{eq:weight-sum}. Hence $F$ is entire.

Now, fix $n$ and let $z\in\Sigma_n$. Then,
$z\notin\Sigma_k$ for $k\neq n$, and therefore
\begin{equation}\label{eq:dominant}
 \left|F(z)-\varepsilon_ng_n(z)\right|
 \leq\varepsilon_n|G_n(z)-g_n(z)|
   +\sum_{k\neq n}\varepsilon_k|G_k(z)|
 \leq\frac{C}{|z|^+}\leq C.
\end{equation}
If $z\notin\bigcup_n\Sigma_n$, then
\begin{equation}\label{eq:outside-all}
                         |F(z)|\leq\frac{C}{|z|^+}\leq C.
\end{equation}

Choose $A>C+1$ and consider
\[
 U_n:=\{z\in\Sigma_n:\varepsilon_n|g_n(z)|>A\}.
\]
The function
\(
 \psi_n:=\Psi_n+\log\varepsilon_n-\log A
\)
maps $U_n$ conformally onto $\Hh$ and satisfies the condition \eqref{eq:model-end}.

Let
\[
                    h_n:=\frac{\varepsilon_ng_n}{A}=e^{\psi_n},
 \qquad q_n:=\frac{F}{A}.
\]
By \eqref{eq:dominant}, we have 
\[
                  |q_n-h_n|\leq\frac{C}{A}<1, \qquad\text{on}\ U_n.
\]
Moreover, $q_n$ has no zero on $U_n$, since
\[
 |F(z)|\geq\varepsilon_n|g_n(z)|-C>A-C>0.
\]
Applying \Cref{prop:stability} gives a simply connected domain
\[
 V_n:=\{z\in U_n:|F(z)|>2A\}
\]
on which $F$ is a universal covering.

It remains to show that there are no additional components of \(F^{-1}(\{w:|w|>2A\})\) other than the \(V_n\). If
$z\in\Sigma_n\setminus U_n$, then \eqref{eq:dominant} gives
\[
                 |F(z)|\leq A+C<2A.
\]
Together with \eqref{eq:outside-all}, this proves
\eqref{eq:exact-large-preimage}. Thus
\[
 F:F^{-1}(\{w:|w|>2A\})\longrightarrow\{w:|w|>2A\}
\]
is a covering map. Consequently,
\[
                          S(F)\subset\overline{D(0,2A)}.
\]
Hence $F\in\Bcal$. Finally, a polynomial has no universal covering tract
over infinity, so $F$ is transcendental. This completes the proof of \Cref{thm:weighted}.
\end{proof}

\section{Proof of the compact inverse problem}\label{sec:inv}
In this section, we would like to show the realization of arbitrary compact set for disjoint type entire functions in the Eremenko-Lyubich class.
\begin{proof}[Proof of \Cref{thm:compact-main}]
Let $\{\Sigma_n\}_{n\in\mathcal I}$ be the family of half strips and \[
                         \Omega:=\bigcup_{n\in\mathcal I}\Sigma_n.
\]
Using \Cref{lem:one-strip} and \Cref{lem:compact-strips}, we always have $L(\Omega)=E$.

For each $n$, we can take a conformal isomorphism
$\Psi_n:\Sigma_n\to\widehat H$ satisfying \eqref{eq:model-end}. Hence, applying
\Cref{thm:weighted}, we obtain a transcendental entire function $F\in\Bcal$, a constant $A>0$, and logarithmic tracts
$V_n\subset\Sigma_n$ over $\C\setminus \overline{D(0, 2A)}$.

Let $\eta>0$ be sufficiently small that 
\[
 2\eta A<\frac12,\qquad
 \eta M(1,F)<\frac12,\qquad
 \eta C<\frac12,
\]
where $C$ is the constant in \eqref{eq:weight-sum}. 
Let $f:=\eta F$. Since
\(
 S(f)=\eta S(F)\subset D(0,1/2),
 \) and \( 
 f(\overline\D)\subset D(0,1/2)\subset\D,
\) the entire function $f\in\Bcal$.

For every $R\geq1$, let
\[
 V_n(R):=\{z\in V_n:|f(z)|>R\}.
\]
Since $2\eta A<1/2<R$ and
$F:V_n\to\{w:|w|>2A\}$ is a universal covering, it follows that $V_n(R)$ is connected and
\[
 f:V_n(R)\longrightarrow\mathbb{C}\setminus \overline{D(0, R)}
\]
is again a universal covering. The exact identity
\eqref{eq:exact-large-preimage} shows that the $V_n(R)$ are all the
components of $f^{-1}(\mathbb{C}\setminus \overline{D(0, R)})$.

If $z\notin\Omega$, then $$|f(z)|=\eta|F(z)|<\eta C<1/2.$$ Hence $z\in\mathcal F(f)$. Therefore
\[
                         \J(f)\subset\Omega.
\]
By \Cref{lem:one-strip} and \Cref{lem:compact-strips},
\begin{equation*}
                        \Lcal(f)=L(\J(f))\subset L(\Omega)= E.
\end{equation*}

For the reverse inclusion, choose $a\in A(f)$. After replacing $a$ by an
iterate, we may suppose that
\[
                         w:=f^N(a)\in A(f),\qquad |w|>1.
\]
For every $n$, the fiber
\[
                  \{z\in V_n(1):f(z)=w\}
\]
is an infinite and unbounded subset of $A(f)$, as $f:V_n(1)\to\{w:|w|>1\}$ is a universal covering and $A(f)$ is completely invariant
by \cite[Theorem~2.2]{RipponStallard}.

Fix $\theta\in E$. If $\theta=\theta_n$ is a central
direction, the unbounded fiber in
$V_n(1)\subset\Sigma_n$ contains a sequence in
$A(f)$ whose arguments tend to $\theta_n$.
Thus $\theta\in\Lcal_A(f)$.

If $\theta$ is not a central direction, then by
the density of $\{\theta_n\}$ in $E$,
we may choose a subsequence $\{\theta_{n_k}\}$ such that
$\theta_{n_k}\to\theta$. From the fiber in
$V_{n_k}(1)$, the points
\(
z_k=e^{i\theta_{n_k}}(x_k+iy_k),
\) with \(x_k>k\) and \(|y_k|<1\),
satisfy $z_k\in A(f)$ and
\[
d_{\T}(\arg z_k,\theta)
\leq\arctan\frac1k+
d_{\T}(\theta_{n_k},\theta)
\longrightarrow0.
\]
Thus $\theta\in\Lcal_A(f)$.

In either case, we have
$E\subset\Lcal_A(f)$. By
\Cref{prop:direction-equality-B},
\[
E\subset\Lcal_A(f)=\Lcal_I(f)
=\Lcal(f)\subset E.
\]
This proves $\Lcal(f)=E$.
\end{proof}

\section{Bishop's models for the Eremenko--Lyubich class}\label{sec:B}

In this section, we recall Bishop's models for the Eremenko--Lyubich class
and construct two types of models based on power sectors
and strips of variable width to treat the finite order case.
\subsection{Realization and normalization at infinity}

Let $$\Omega=\bigcup_j\Omega_j,$$ where the $\Omega_j$ are
pairwise disjoint, unbounded, simply connected domains in
$\C$ with connected boundaries, and distinct components
accumulate only at infinity.

For each $j$, let $\tau_j:\Omega_j\to\Hh$ be a conformal
isomorphism, and define $\tau:\Omega\to\Hh$ by
$\tau|_{\Omega_j}=\tau_j$.
Suppose that
\[
|\tau(z_n)|\to\infty
\quad\Longrightarrow\quad
|z_n|\to\infty
\]
for every sequence $\{z_n\}\subset\Omega$.
Following Bishop \cite{BishopB}, the pair
$(\Omega,\mathcal F)$, where
$\mathcal F=\exp\circ\tau$, is called a model. For such a model, we use the notation
\[
\Omega(s):=\{z\in\Omega:\operatorname{Re}\tau(z)>s\},
\quad
\Omega(s,t):=\{z\in\Omega:
s<\operatorname{Re}\tau(z)<t\},
\]
and write $\Omega_j(s):=\Omega_j\cap\Omega(s)$, for $0<s<t$.

We will use the quasiregular construction underlying
Bishop's realization theorem, together with the following
version of the Teichm\"uller--Wittich--Belinskii theorem;
see \cite[Theorem~2.5]{BishopS} and
\cite[Lemma~4.4]{BER2024}.

\begin{theorem}\label[theorem]{thm:twb}
Let $\psi:\C\to\C$ be quasiconformal. If, for some $R>0$,
\[
 \int_{|z|>R}\frac{|\mu_\psi(z)|}{|z|^2}\dd A(z)<\infty,
\]
then $\psi(z)=az+o(|z|)$ as $z\to\infty$, for some $a\neq0$.
\end{theorem}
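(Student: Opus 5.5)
The plan is to pass to logarithmic coordinates, where the hypothesis says the dilatation is integrable on a half-cylinder. The goal is then to show that the lifted map is asymptotically a translation.

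\emph{Normalization and lift.} After composing with a translation we may assume $\psi(0)=0$. Since $\psi$ is a homeomorphism of $\C$, it also satisfies $\psi(z)\to\infty$ as $z\to\infty$. Enlarging $R$, the map $\psi$ sends $\{|z|>R\}$ into a neighbourhood of $\infty$ avoiding $0$. Put $z=e^\zeta$ and lift $\psi$ to a quasiconformal map $\Phi$ of the half-plane $P=\{\operatorname{Re}\zeta>\log R\}$ with $e^{\Phi(\zeta)}=\psi(e^\zeta)$ and $\Phi(\zeta+2\pi i)=\Phi(\zeta)+2\pi i$. The lift is $2\pi i$-equivariant because $\psi$ preserves the orientation of loops around $0$. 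Dilatation is invariant under holomorphic changes of variable, so $|\mu_\Phi(\zeta)|=|\mu_\psi(e^\zeta)|$. Since $dA(z)/|z|^2=dA(\zeta)$, the hypothesis becomes
\[
\iint_{Q}|\mu_\Phi|\dd A<\infty,\qquad Q=\{\log R<\operatorname{Re}\zeta,\ 0\le\operatorname{Im}\zeta<2\pi\}.
\]
The conclusion $\psi(z)=az+o(|z|)$ with $a\neq0$ is equivalent to $\Phi(\zeta)-\zeta\to c$ as $\operatorname{Re}\zeta\to\infty$, with $a=e^c$. It therefore suffices to prove this convergence.

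\emph{Modulus estimates.} For $t>s$ large, consider the quadrilaterals $Q(s,t)=\{s<\operatorname{Re}\zeta<t,\ 0<\operatorname{Im}\zeta<2\pi\}$, and their periodic images $Q(s,t)+2\pi i k$. These correspond in the $z$-plane to the round annuli $\{e^s<|z|<e^t\}$. Apply the standard length--area (Rengel/Teichmüller) inequalities to the family of horizontal segments and to the family of vertical segments, weighting each by the Jacobian of $\Phi$. This should give two-sided bounds on the modulus of the image annulus $\psi(\{e^s<|z|<e^t\})$:
\[
\Bigl|\operatorname{mod}\psi(\{e^s<|z|<e^t\})-\frac{t-s}{2\pi}\Bigr|\le C\iint_{Q(s,\infty)}\frac{|\mu_\Phi|}{1-|\mu_\Phi|}\dd A=:\varepsilon(s).
\]
Since $|\mu_\psi|\le k<1$, the factor $1/(1-|\mu_\Phi|)$ is harmless, and $\varepsilon(s)\to0$ as $s\to\infty$. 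The same argument applied to the parallelograms $\{s<\operatorname{Re}\zeta<t,\ \alpha<\operatorname{Im}\zeta<\beta\}$ controls the extremal distance between the images of the two vertical sides, and hence the angular behaviour, not only the radial one.

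\emph{From moduli to pointwise convergence.} This is the step I expect to be the main obstacle. The image of a circle $|z|=r$ under a map whose annuli have almost the full modulus $\frac{1}{2\pi}\log(r_2/r_1)+o(1)$ must be almost round. To make this precise I would use Teichmüller's extremal ring estimate: a ring separating $0$ and $w_1$ from $w_2$ and $\infty$ has modulus at most $\frac1{2\pi}\log(|w_2|/|w_1|)+C$, with equality-type rigidity as the defect tends to $0$. Applied to the images of the annuli $\{e^s<|z|<e^t\}$ with $t-s$ large, this shows that
\[
\max_{|z|=r}\log|\psi(z)|-\min_{|z|=r}\log|\psi(z)|\to0,
\]
and that $\log|\psi(re^{i\theta})|-\log r$ is Cauchy as $r\to\infty$. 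This yields $\operatorname{Re}(\Phi(\zeta)-\zeta)\to\operatorname{Re}c$.

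For the imaginary part, I would use the vertical-family estimate. The image of the sector $\{\alpha<\arg z<\beta,\ s<\log|z|<t\}$ has extremal-length behaviour asymptotic to that of the corresponding Euclidean rectangle. Combined with the local Hölder continuity of the uniformly quasiconformal translates $\Phi(\cdot+\sigma)-\sigma$, which form a normal family, this gives $\operatorname{Im}(\Phi(\zeta)-\zeta)\to\operatorname{Im}c$.

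Equivalently, one could argue via normal families as follows. Every limit of $\Phi(\zeta+\sigma)-\sigma$ as $\sigma\to+\infty$ has vanishing dilatation, because $\iint_{Q(\sigma,\infty)}|\mu_\Phi|\to0$. Such a limit is therefore a conformal self-map of $\C$ commuting with $\zeta\mapsto\zeta+2\pi i$, and hence a translation $\zeta\mapsto\zeta+c'$. The modulus estimates with the quantitative $\varepsilon(s)$ then force the constants $c'$ to be the same for all subsequences, which gives $\Phi(\zeta)-\zeta\to c$. Setting $a=e^c\neq0$ completes the plan.
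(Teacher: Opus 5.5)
The paper does not prove this statement; it quotes this version of the Teichm\"uller--Wittich--Belinskii theorem from \cite[Theorem~2.5]{BishopS} and \cite[Lemma~4.4]{BER2024}. Your framework (logarithmic lift, two-sided modulus estimates, normal families) is sound, and the radial half can be completed, but the angular half has a genuine gap.

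Write $D=\Phi-\zeta$. Annuli and axis-parallel rectangles in the $\zeta$-plane (round annuli and sectors in the $z$-plane) detect a twist $b=\operatorname{Im}(D(t)-D(s))$ only to second order. The twist changes the modulus of a rectangle of length $L=t-s$ by $O(b^2/L)$, so a first-order error bound $C\iint|\mu|$ gives at best $|b|\lesssim\sqrt{L\varepsilon}$ on each block, and these bounds do not sum.

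A concrete witness is $\psi(z)=z\exp(i\log\log|z|)$ near $\infty$, i.e.\ $\Phi(\zeta)=\zeta+i\log\operatorname{Re}\zeta$. It maps every large annulus $\{e^s<|z|<e^t\}$ onto itself. The images of sectors $\{s<\log|z|<t,\ \alpha<\arg z<\beta\}$ have moduli within $O(1/s)$ of the Euclidean ones. Also $\Phi(\zeta+\sigma)-\Phi(\sigma)\to\zeta$ locally uniformly. Yet $\operatorname{Im}D(\sigma)=\log\sigma\to\infty$. This map violates the hypothesis, since $|\mu_\Phi|\approx1/(2\operatorname{Re}\zeta)$. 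But it has every property that your sector/vertical-family step and your closing claim that ``the modulus estimates force the constants $c'$ to agree'' actually use. So those steps cannot control $\operatorname{Im}c'$.

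The missing idea is a curve family transversal to the twist. Take the segments $\gamma_y^\pm(x)=x+i\bigl(y\pm(x-s)\bigr)$, $s\le x\le t$, $0\le y<2\pi$. These are logarithmic spirals in the $z$-plane, and they cover the cylinder $C_{s,t}=\{s<\operatorname{Re}\zeta<t\}/2\pi i\mathbb Z$ once. Along them $|\partial_x(\Phi\circ\gamma_y^\pm)|=\sqrt2\,|\Phi_\zeta|\,|1\mp i\mu_\Phi|$, so the twist enters linearly.

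Suppose $\Phi$ is within $o(1)$ of a translation near $\operatorname{Re}\zeta=s$ and near $\operatorname{Re}\zeta=t$. Put $a=\operatorname{Re}(D(t)-D(s))$. Then the image curves have length at least $|L+a+i(\pm L+b)|-o(1)$, and $\operatorname{area}\Phi(C_{s,t})\le2\pi(L+a)+o(1)$. Cauchy--Schwarz over $y$ gives
\[
(L+a)^2+(L\pm b)^2\le2(L+a)\Bigl(L+C\iint_{Q(s,t)}|\mu_\Phi|\dd A\Bigr)+o(L).
\]
After cancelling the terms $L^2$ and $La$, one obtains $\pm b\le C'\iint_{Q(s,\infty)}|\mu_\Phi|\dd A+o(1)$ for $L\ge1$, once $a$ is known to be bounded.

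Two smaller points also need repair.
\begin{itemize}
\item The family $\Phi(\zeta+\sigma)-\sigma$ is not known to be normal, because boundedness of $\operatorname{Re}(\Phi(\sigma)-\sigma)$ is part of what must be proved. Use $\Phi(\zeta+\sigma)-\Phi(\sigma)$ instead. Its limits are conformal, $2\pi i$-equivariant and fix $0$, hence equal the identity. This supplies the near-translation property used above.
\item Teichm\"uller's extremal-ring bound alone gives only $\max_{|z|=r}\log|\psi|-\min_{|z|=r}\log|\psi|=O(1)$, since its constant $\log16$ does not tend to zero. What makes $\operatorname{Re}D$ Cauchy is the near-roundness coming from the normalized family. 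Combine it with inscribed and circumscribed round annuli and your two-sided modulus estimate.
\end{itemize}
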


The following proposition combines Bishop's construction with the above theorem to obtain a realization whose quasiconformal change of coordinates is asymptotic to the identity at infinity.
\begin{proposition}\label[proposition]{thm:bishop}
Suppose that $(\Omega,\mathcal F)$ is a model and
\begin{equation*}
 \int_{\Omega(1/2,2)\cap\{|z|>1\}}\frac{\dd A(z)}{|z|^2}<\infty.
\end{equation*}
Then, there are a transcendental entire function $f_0\in\Bcal$, a quasiregular
map $Q:\C\to\C$, and a quasiconformal homeomorphism $\phi:\C\to\C$
such that
\begin{align}
 Q&=f_0\circ\phi,&
 \phi(z)&=z+o(|z|)\quad(z\to\infty),\label{eq:model-normalization}\\
 Q&=\mathcal F\quad\text{on }\Omega(2),&
 |Q|&\leq e^2\quad\text{on }\C\setminus\Omega(2),\label{eq:model-agreement}\\
 S(f_0)&\subset\overline{D(0,e)}.&\label{eq:bishop-sing}
\end{align}
Moreover, $\mu_\phi=\mu_Q=0$ almost everywhere off $\Omega(1/2,2)$.
For every $R>e^2$,
\begin{equation}\label{eq:bishop-preimage}
 f_0^{-1}(\C\setminus\overline{D(0,R)})
       =\phi\bigl(\Omega(\log R)\bigr),
\end{equation}
and the restriction of $f_0$ to each component of this set is a
universal covering of $\C\setminus\overline{D(0,R)}$.
\end{proposition}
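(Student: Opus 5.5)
We follow Bishop's quasiregular construction for the class $\Bcal$ and then use \Cref{thm:twb} to normalize the change of coordinates at infinity. The steps are: build a quasiregular map $Q$, straighten it to an entire function, fix the normalization, and finally read off the singular values and the tract structure.

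\emph{Step 1: the quasiregular map.} Following \cite{BishopB}, define $Q:=\mathcal F=\exp\circ\tau$ on $\Omega(2)$. On the complement of $\Omega(1/2)$ we want a holomorphic map bounded by $e^{1/2}$; for instance $Q$ could be constant there, or Bishop's bounded interpolation, so that it is holomorphic off the transition region. In the collar $\Omega(1/2,2)$ we interpolate. In the coordinate $\zeta=\tau(z)$ with $\operatorname{Re}\zeta\in(1/2,2)$, we set $Q=\exp(\sigma(\zeta))$, where $\sigma$ is a quasiconformal map of the vertical strip. It equals the identity near $\operatorname{Re}\zeta=2$, and near $\operatorname{Re}\zeta=1/2$ it is chosen to match the boundary values coming from outside. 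The main obstacle is this gluing along $\partial\Omega$. Bishop handles it by folding along the boundary, with a uniform dilatation bound $K$ that depends on no data. The condition that components accumulate only at infinity, together with connected boundaries, makes the construction local.

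The outcome is a quasiregular $Q$ with $\mu_Q=0$ off $\Omega(1/2,2)$, with $Q=\mathcal F$ on $\Omega(2)$, and with $|Q|\le e^2$ elsewhere. The last bound holds because on $\Omega(1/2,2)$ we have $|Q|=e^{\operatorname{Re}\sigma}\le e^2$, and off $\Omega(1/2)$ the values are smaller. The only singular values of $Q$ in the sense of branched or asymptotic behaviour come from the region where $|Q|\le e$. We arrange this by making $\sigma$ a covering-compatible map, that is, a homeomorphism on each vertical line family, so that $Q$ is an unbranched covering over $\{|w|>e\}$.

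\emph{Step 2: straightening.} By the measurable Riemann mapping theorem, let $\phi$ solve the Beltrami equation with $\mu_\phi=\mu_Q$. Then $f_0:=Q\circ\phi^{-1}$ is entire and $\mu_\phi=0$ off $\Omega(1/2,2)$. Since $\|\mu_Q\|_\infty\le k<1$ and $\mu_Q$ is supported in $\Omega(1/2,2)$, the hypothesis of the proposition gives
\[
\int_{|z|>1}\frac{|\mu_\phi|}{|z|^2}\dd A\le\int_{\Omega(1/2,2)\cap\{|z|>1\}}\frac{\dd A}{|z|^2}<\infty.
\]
Therefore \Cref{thm:twb} yields $\phi(z)=az+o(|z|)$ with $a\neq 0$.

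\emph{Step 3: normalization.} Replace $\phi$ by $a^{-1}\phi$ and $f_0$ by $f_0(a\,\cdot)$. This preserves $Q=f_0\circ\phi$ and gives \eqref{eq:model-normalization}. Transcendence follows because $f_0$ has logarithmic tracts.

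\emph{Step 4: preimages and singular values.} For $R>e^2$, the identity $|Q|\le e^2$ off $\Omega(2)$ gives $Q^{-1}(\{|w|>R\})=\{z\in\Omega(2):e^{\operatorname{Re}\tau}>R\}=\Omega(\log R)$. Applying $\phi$ then gives \eqref{eq:bishop-preimage}. On each component $\Omega_j(\log R)$, $\exp\circ\tau_j$ is a universal covering onto $\{|w|>R\}$, and composing with the homeomorphism $\phi^{-1}$ preserves this property.

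For \eqref{eq:bishop-sing}, two facts combine. First, $f_0$ has no critical points in $\phi(\Omega(1/2,2))$ once $Q$ is locally injective there, which holds because $\sigma$ is a homeomorphism. Second, over $\{e<|w|\}$ the map $Q$ restricted to $Q^{-1}(\{|w|>e\})\subset\Omega(1)$ is the composition $\exp\circ\sigma\circ\tau$, which is a covering. Hence every critical or asymptotic value lies in $\overline{D(0,e)}$.

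The delicate point is making the interpolation in Step 1 consistent with the exterior definition while keeping $|Q|>e$ exactly on $\Omega(1)$. This is where we rely on Bishop's construction in \cite{BishopB} and take its stated properties as given.
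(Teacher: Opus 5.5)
Your proposal takes the same route as the paper. You treat Bishop's quasiregular construction in \cite{BishopB} as a black box supplying $Q$ with $\mu_Q$ supported in $\Omega(1/2,2)$, $Q=\mathcal F$ on $\Omega(2)$, $|Q|\le e^2$ elsewhere and $S(f_0)\subset\overline{D(0,e)}$; you then use the logarithmic-area hypothesis to apply \Cref{thm:twb}, rescale linearly, and read off \eqref{eq:bishop-preimage} and the covering property from $Q=\mathcal F$ on $\Omega(2)$.

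Your heuristic sketch of Bishop's interior construction should be dropped or corrected. For instance, $Q$ cannot be constant on an open set, since $Q=f_0\circ\phi$ with $f_0$ nonconstant entire. This does not affect the argument, because you ultimately take those properties from \cite{BishopB}, exactly as the paper does.
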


\begin{proof}
The result follows from Bishop's quasiregular construction
\cite[Theorem~1.1 and Sections~2--3]{BishopB}.
The finite logarithmic area condition allows us to apply
\Cref{thm:twb}, and a linear normalization then gives
$\phi(z)=z+o(|z|)$ as $z\to\infty$.
\end{proof}

Since $\phi(z)=z+o(|z|)$ as $z\to\infty$,
the map $\phi$ preserves limiting directions. In particular,
for every unbounded set $X\subset\C$,
\begin{equation}\label{eq:phi-directions}
L(\phi(X))=L(X).
\end{equation}

\subsection{Power sector models}\label{sec:power-models}
We first construct a model based on power sectors whose opening angles determine the order of the corresponding entire function. This model will be used to establish the case \(\rho=\rho_E\) of \Cref{thm:arcs-main}.

For each arc $I_j$ in \Cref{thm:arcs-main}, choose a central
direction $\theta_j$ and write
\[
I_j=\{\theta_j+t\pmod{2\pi}:|t|\leq b_j\},
\qquad
b_j=\frac{\ell_j}{2},\qquad
p_j=\frac{\pi}{2b_j}.
\]
Consider the sector
\[
S_j=\{e^{i\theta_j}re^{it}:r>0,\ |t|<b_j\},
\]
and define a conformal map $P_j:S_j\to\Hh$ by
\[
P_j(e^{i\theta_j}re^{it})=r^{p_j}e^{ip_jt}.
\]
When $b_j=\pi$, the domain $S_j$ is the plane slit
along the ray opposite to $e^{i\theta_j}$.

Fix $b>1$ and set
\[
T_j(s):=\{z\in S_j:\operatorname{Re}P_j(z)>b+s\},
\qquad s\geq0.
\]
Let $\Omega_j=T_j(0)$, $\tau_j=P_j-b$, and
$\Omega=\bigcup_{j=1}^m\Omega_j$.
Defining $\tau$ by $\tau|_{\Omega_j}=\tau_j$,
we obtain a model $(\Omega,\mathcal F)$,
where $\mathcal F=\exp\circ\tau$ and
\[
\Omega(s)=\bigcup_{j=1}^m T_j(s).
\]

We now verify that the domains $T_j(s)$ have
the prescribed limiting directions.
\begin{lemma}\label[lemma]{lem:sector-direction}
For every $s\geq0$ and every $j$, we have $L(T_j(s))=I_j$ and  $L(\Omega(s))=E$.
\end{lemma}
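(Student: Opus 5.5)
The plan is to prove the two inclusions $L(T_j(s))\subset I_j$ and $I_j\subset L(T_j(s))$ directly from the explicit form of $P_j$. The statement about $\Omega(s)$ then follows because the union $\Omega(s)=\bigcup_{j=1}^m T_j(s)$ is finite.

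For the inclusion $L(T_j(s))\subset I_j$, I use $T_j(s)\subset S_j$. Every $z\in T_j(s)$ can be written as $z=e^{i\theta_j}re^{it}$ with $|t|<b_j$, so $\arg z$ lies in the open arc $\{\theta_j+t:|t|<b_j\}$. If $z_k\in T_j(s)$ satisfy $|z_k|\to\infty$ and $\arg z_k\to\theta$, then $\theta$ lies in the closure of this arc, which is $I_j$. When $b_j=\pi$ this is trivially true, since then $I_j=\T$.

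For the reverse inclusion, I fix $t$ with $|t|<b_j$ and look at the ray $z=e^{i\theta_j}re^{it}$, $r>0$. On this ray,
\[
\operatorname{Re}P_j(z)=r^{p_j}\cos(p_jt).
\]
Since $p_j=\pi/(2b_j)$, we have $|p_jt|<\pi/2$, so $\cos(p_jt)>0$. Hence $\operatorname{Re}P_j(z)\to\infty$ as $r\to\infty$, and the points $z_k=e^{i\theta_j}ke^{it}$ lie in $T_j(s)$ for all large $k$. These points satisfy $|z_k|\to\infty$ and $\arg z_k=\theta_j+t$, so $\theta_j+t\in L(T_j(s))$. Thus $L(T_j(s))$ contains the open arc. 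Since $L(T_j(s))$ is closed by \Cref{lem:LD-compact}, it also contains the two endpoints $\theta_j\pm b_j$, and therefore $L(T_j(s))=I_j$.

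Finally, $L$ commutes with finite unions. Any sequence in $\Omega(s)$ tending to infinity has a subsequence lying in a single $T_j(s)$, which gives $L(\Omega(s))\subset\bigcup_j L(T_j(s))$. The reverse inclusion holds because $L$ is monotone under inclusion of sets. Hence $L(\Omega(s))=\bigcup_{j=1}^m I_j=E$.

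There is no real obstacle here. The only point needing care is the endpoints of each arc, and closedness of $L(T_j(s))$ handles them. I also make sure that the positivity of $\cos(p_jt)$ uses the strict inequality $|t|<b_j$, including in the slit case $b_j=\pi$.
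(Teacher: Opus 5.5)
Your proposal is correct and follows essentially the same route as the paper: you get $L(T_j(s))\subset I_j$ from $T_j(s)\subset S_j$, and the rays $re^{i(\theta_j+t)}$ with $|t|<b_j$ eventually lie in $T_j(s)$ because $\operatorname{Re}P_j=r^{p_j}\cos(p_jt)\to\infty$. As in the paper, closedness supplies the endpoints and the finite union gives $L(\Omega(s))=E$; you additionally spell out why $\cos(p_jt)>0$ and why $L$ commutes with finite unions, which the paper leaves implicit.
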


\begin{proof}
The inclusion $L(T_j(s))\subset I_j$ follows from $T_j(s)\subset S_j$.
Conversely, for $|\delta|<b_j$, let
$z_r=re^{i(\theta_j+\delta)}$. Then
\[
\operatorname{Re}P_j(z_r)
=r^{p_j}\cos(p_j\delta)\longrightarrow\infty
\qquad (r\to\infty).
\]
Hence $z_r\in T_j(s)$ for all sufficiently large $r$,
so $\theta_j+\delta\in L(T_j(s))$.
By the closedness of $L(T_j(s))$, we obtain
$L(T_j(s))=I_j$.
Finally, since $\Omega(s)=\bigcup_{j=1}^m T_j(s)$,
we have
\[
L(\Omega(s))
=\bigcup_{j=1}^m L(T_j(s))
=\bigcup_{j=1}^m I_j=E.
\]
\end{proof}

We next verify that the power-sector model satisfies the finite logarithmic area condition required in \Cref{thm:bishop}.
\begin{lemma}\label[lemma]{lem:logarea}
The power sector model $(\Omega,e^\tau)$ satisfies 
\[
 \int_{\Omega(1/2,2)}\frac{\dd A(z)}{|z|^2}
 =\pi\log\frac{b+2}{b+1/2}
       \sum_{j=1}^m\frac{1}{p_j^2}
 <\infty.
\]
\end{lemma}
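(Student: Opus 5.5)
Since $\Omega(1/2,2)=\bigcup_{j=1}^m\bigl(T_j(1/2)\setminus\overline{T_j(2)}\bigr)$ is a disjoint union, I will compute the logarithmic area of each piece separately and then sum over $j$. Each piece is the region $\{z\in S_j:\ b+1/2<\operatorname{Re}P_j(z)<b+2\}$. I work in polar coordinates adapted to $S_j$ and write $z=e^{i\theta_j}re^{it}$ with $|t|<b_j$. The measure $\dd A/|z|^2$ equals $\dd r\,\dd t/r$, which is $\dd(\log r)\,\dd t$, and the rotation by $e^{i\theta_j}$ does not change it.

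For fixed $t$, the defining inequality reads $b+1/2<r^{p_j}\cos(p_jt)<b+2$. Since $p_j|t|<\pi/2$, we have $\cos(p_jt)>0$, so this holds exactly when
\[
\frac1{p_j}\log\frac{b+1/2}{\cos(p_jt)}<\log r<\frac1{p_j}\log\frac{b+2}{\cos(p_jt)}.
\]
The $\cos(p_jt)$ terms cancel, and the $\log r$-interval has length $\frac1{p_j}\log\frac{b+2}{b+1/2}$, which does not depend on $t$. Integrating over $t\in(-b_j,b_j)$ gives
\[
\frac{2b_j}{p_j}\log\frac{b+2}{b+1/2}.
\]
Using $b_j=\pi/(2p_j)$, we get $2b_j/p_j=\pi/p_j^2$.

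Summing over $j$ gives the stated formula, and the sum is finite because $m<\infty$. The only points to watch are these. First, the region must lie where the integrand is defined. Since $b>1$, $\operatorname{Re}P_j>b+1/2$ forces $r^{p_j}>b+1/2>1$, so $|z|>1$; this also shows that the restriction to $\{|z|>1\}$ in \Cref{thm:bishop} is harmless. Second, when $b_j=\pi$ the slit ray has measure zero and does not affect the integral. I expect no real obstacle here: the computation is routine, and the one point needing care is the observation that the radial length is independent of $t$.
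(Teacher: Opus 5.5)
Your proof is correct and is essentially the paper's computation. The paper pushes each piece forward by $w=P_j(z)$, using $\dd A(z)/|z|^2=p_j^{-2}\dd A(w)/|w|^2$, and integrates over the vertical strip $b+1/2<\operatorname{Re}w<b+2$ in Cartesian coordinates, where $\int_{-\infty}^{\infty}\dd y/(x^2+y^2)=\pi/x$ gives the answer. You instead work in log-polar coordinates in $z$, where the fact that the $\log r$-length of each fibre does not depend on $t$ plays the same role; your remark that the region lies in $\{|z|>1\}$ is a small check the paper leaves implicit.
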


\begin{proof}
Under $w=P_j(z)$ one has
$\dd A(z)/|z|^2=p_j^{-2}\dd A(w)/|w|^2$. Up to a boundary of area
zero, we have
\[
 \int_{T_j(1/2)\setminus T_j(2)}
       \frac{\dd A(z)}{|z|^2}
 =
 \frac1{p_j^2}\int_{b+1/2}^{b+2}
       \int_{-\infty}^{\infty}\frac{\dd y\dd x}{x^2+y^2}
 =\frac{\pi}{p_j^2}\int_{b+1/2}^{b+2}\frac{\dd x}{x}.
\]
Summing over the finitely many components proves the claim.
\end{proof}

\subsection{Variable width strip models}\label{sec:strips}
To treat the case \(\rho>\rho_E\), we consider conformal maps from strips of variable width onto a horizontal strip. As in the case \(\rho=\rho_E\), we need to verify the finite logarithmic area condition in \Cref{thm:bishop}. For this purpose, we first establish a uniform lower bound for the derivatives of these conformal maps using estimates for the hyperbolic density. This estimate will also be used to control the growth of the corresponding models.
\begin{lemma}\label[lemma]{lem:strip-derivative}
Let $0<a\leq b<\infty$ and $L,M\geq 0$ be constants.
Suppose that $h:\R\to(0,\infty)$ is twice continuously
differentiable and satisfies
\[
a\leq h(x)\leq b,\qquad
|h'(x)|\leq L,\qquad |h''(x)|\leq M \qquad (x\in\R).
\]Let
\(
U=\{x+iy:|y|<h(x)\}\) and
\(
S=\{w:|\operatorname{Im}w|<\pi/2\}.
\)
If $\chi:U\to S$ is the conformal isomorphism
normalized by $\chi(0)=0$ and preserving the two ends
in the sense that
\[
\lim_{\substack{z\in U\\ \operatorname{Re}z\to-\infty}}
\operatorname{Re}\chi(z)=-\infty,
\qquad
\lim_{\substack{z\in U\\ \operatorname{Re}z\to+\infty}}
\operatorname{Re}\chi(z)=+\infty,
\]
then
\[
\chi(\bar z)=\overline{\chi(z)}
\quad (z\in U),
\qquad
\chi'(x)>0
\quad (x\in\R).
\]
Moreover, there exists a constant $c>0$,
depending only on $a,b,L,M$, such that
\[
|\chi'(z)|\geq c,
\] for $z\in U$.
\end{lemma}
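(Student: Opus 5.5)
The plan is to get the symmetry and positivity of $\chi'$ on $\R$ from uniqueness, and the lower bound from comparing hyperbolic densities.

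\emph{Symmetry and $\chi'(x)>0$.} The map $z\mapsto\overline{\chi(\bar z)}$ is again a conformal isomorphism $U\to S$, since $U$ and $S$ are both symmetric about $\R$. It fixes $0$ and preserves the two ends. A conformal automorphism of $S$ that fixes $0$ and fixes both ends must be the identity: conjugating by $\exp$ turns $S$ into the right half plane with its ends at $0$ and $\infty$, and the only automorphisms fixing $1$, $0$ and $\infty$ there are trivial. Hence $\chi(\bar z)=\overline{\chi(z)}$.

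Consequently $\chi(\R)\subset\R$, and $\chi|_\R$ is an injective real map from $\R$ onto $\R$, because $\R$ is the hyperbolic geodesic joining the two ends. By the end normalization it is increasing, and $\chi'\neq0$, so $\chi'(x)>0$.

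\emph{Reduction of the lower bound.} By conformal invariance of the hyperbolic metric, $|\chi'(z)|=\lambda_U(z)/\lambda_S(\chi(z))$. Write $d(z)=\dist(z,\partial U)$. Koebe's estimate for the simply connected domain $U$ gives $\lambda_U(z)\geq 1/(2d(z))$.

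For the strip, $\lambda_S(w)=1/\cos(\operatorname{Im}w)$. Put $u:=\operatorname{Im}\chi+\pi/2$, which is harmonic on $U$ with values in $(0,\pi)$. Then
\[
\lambda_S(\chi(z))\leq \frac{C_0}{\min\{u(z),\pi-u(z)\}}
\]
for an absolute constant $C_0$. It therefore suffices to prove
\[
\min\{u(z),\pi-u(z)\}\geq c_1\min\{d(z),1\}
\]
with $c_1=c_1(a,b,L,M)$. When $d(z)\geq 1$ we have $\lambda_U(z)\geq 1/(2b)$, and an upper bound for $\lambda_S$ then suffices. By the symmetry just proved it is enough to treat the lower half $U^-=\{x+iy:-h(x)<y<0\}$ and to bound $u$ from below there.

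\emph{Key observation.} By symmetry, $u=\pi/2$ on $\R$. On $U^-$, $u$ is therefore the harmonic function with boundary values $0$ on the lower graph $y=-h(x)$ and $\pi/2$ on $\R$.

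The bounds on $h,h',h''$ give a uniform interior ball condition along the lower graph: there is $r_0=r_0(a,L,M)\leq a/4$ such that every boundary point $\zeta$ lies on a disk $D(\zeta+r_0\nu,r_0)\subset U^-$, where $\nu$ is the inner normal. This comes from the curvature bound $|h''|/(1+h'^2)^{3/2}\leq M$, together with the fact that the graph is $\geq a$ away from $\R$.

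\emph{Uniform Hopf lemma.} This is the step I expect to be the main obstacle. I would proceed in two parts.

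First, show $u\geq\kappa>0$ on the set $\{z\in U^-:d(z)\geq r_0/2\}$, with $\kappa=\kappa(a,b,L)$. Every such point can be joined to $\R$ by a chain of boundedly many disks, each of radius comparable to $r_0$ and each with its double inside $U^-\cup\R\cup\{\text{reflection}\}$. The length of the chain is bounded because $h\leq b$ and the Lipschitz bound $L$ controls the geometry. Harnack's inequality along the chain, starting from the value $\pi/2$ on $\R$ (use $u$ on all of $U$, where $u\le\pi$), gives the bound.

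Second, for $z$ with $d(z)<r_0/2$, let $\zeta$ be the nearest boundary point and use the standard barrier on the annulus $r_0/2<|w-(\zeta+r_0\nu)|<r_0$. The barrier vanishes on the outer circle and equals $\kappa$ on the inner circle. By the maximum principle, $u\geq\kappa\,\frac{\log(r_0/|w-c|)}{\log 2}\geq c_2\,d(z)/r_0$.

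Combining the two parts gives $\min\{u,\pi-u\}\geq c_1\min\{d,1\}$. Hence $|\chi'(z)|\geq \min\{d,1\}\,c_1/(2C_0 d)\geq c$, using $d\leq b$. The constant $c$ depends only on $a,b,L,M$.
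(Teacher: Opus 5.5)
Your proposal is correct and follows essentially the paper's route: symmetry by uniqueness, a uniform interior-disk condition from the bound on $h''$, a positive lower bound for the harmonic function $\pi/2\mp\operatorname{Im}\chi$ where $d(z)$ is bounded below and a bound linear in $d(z)$ near the boundary, and finally $|\chi'|=\lambda_U\cos(\operatorname{Im}\chi)$ with $\lambda_U\geq 1/(2d)$. The differences are cosmetic: where you use a Harnack chain, the paper integrates $|\nabla\log v|\leq 2/d$ along a vertical segment, and where you use a logarithmic barrier on an annulus, it applies Harnack's inequality directly in the interior disk $D(q,r)$ (note also that your $\kappa$ depends on $M$ through $r_0$).
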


\begin{proof}
By reflection and uniqueness of the normalized map,
we have
\[
\chi(\bar z)=\overline{\chi(z)}.
\]
In particular, $\chi$ maps $\R$ onto $\R$.
Since $\chi$ preserves the two ends, it follows that
$\chi'(x)>0$ for all $x\in\R$, and hence
$\chi(U_+)=S_+$, where
$U_+:=U\cap\{\operatorname{Im} z>0\}$ and
$S_+:=S\cap\{\operatorname{Im} w>0\}$.

Write $d(z)=\operatorname{dist}(z,\partial U)$ and define
\[
 v(z)=\frac{\pi}{2}-\operatorname{Im}\chi(z).
\]
Then, $v$ is positive and harmonic on $U$, with
$v=\pi/2$ on $\R$ and $0<v\leq\pi/2$ in $U_+$. We first show that
\[
 v(z)\geq C d(z),
 \qquad z\in U_+
\]
for a constant $C>0$ depending only on $a,b,L,M$.

 At each point $P=(u,h(u))$ on the upper boundary of $U$,
let
\[
\nu=\frac{(-h'(u),1)}{\sqrt{1+h'(u)^2}}
\]
be the outward unit normal to $U$ at $P$.
Then, we claim that, for a sufficiently small $r>0$ depending only
on $a$ and $M$, the disk centered at $q=P-r\nu$
satisfies
\[
D(q,r)\subset U_+.
\]
To prove this, choose $r>0$ sufficiently small that
$2r<a$ and $rM<1$. For $P+\xi:=(u+\xi_1, h(u)+\xi_2)\in D(q,r)$, we have
$$|(P+\xi)-q|=|\xi+r\nu|<r,$$ and hence,
$$\nu\cdot\xi<-\frac{|\xi|^2}{2r}.$$ 
Let $F(x,y)=y-h(x)$. 
By Taylor's inequality and the bound on $h''$, we have
\[
\begin{aligned}
 F(P+\xi)
 &=h(u)+\xi_2-h(u+\xi_1)\\
 &\leq\xi_2-h'(u)\xi_1+\frac M2\xi_1^2\\
 &\leq \sqrt{1+h'(u)^2} \nu\cdot\xi+\frac M2|\xi|^2
 <-\frac12\left(\frac{\sqrt{1+h'(u)^2}}{r}-M\right)|\xi|^2<0.
\end{aligned}
\]
where the last inequality follows from
$\sqrt{1+h'(u)^2}\geq1$ and $rM<1$.
Moreover, since $|\xi|<2r$, we have
\[
\operatorname{Im}(P+\xi)
>h(u)-2r\geq a-2r>0.
\]
Thus $P+\xi\in U_+$, proving the claim. In particular, $q\in U_+$ and $d(q)\ge r$.

To estimate $v$ away from the boundary, we consider the following two cases according to
whether $d(z)\ge r$ or $d(z)<r$. 

\noindent\textbf{Case 1:} $z=x+iy\in U_+$ with $d(z)\ge r$.

Define
\(
 \Gamma_\pm:=\{u\pm ih(u):u\in\R\}.
\) 
For $0\leq t\leq y$ and $u\in\R$, the Lipschitz bound on $h$ and the
Cauchy--Schwarz inequality give
\[
\begin{aligned}
 h(x)-t
 &\leq |h(x)-h(u)|+|h(u)-t|\\
 &\leq L|x-u|+|h(u)-t|\\
 &\leq \sqrt{1+L^2}\times\sqrt{(x-u)^2+(h(u)-t)^2}.
\end{aligned}
\]
Taking the infimum over $u$ yields
\[
 \operatorname{dist}(x+it,\Gamma_+)
 \geq\frac{h(x)-t}{\sqrt{1+L^2}}.
\]
On the other hand, as $t\geq0$ and $h(u)\geq a$,
\[
\dist(x+it,\Gamma_-)=\inf_{u\in\R}\sqrt{(x-u)^2+(t+h(u))^2}\geq t+h(u)\geq a.
\]
Since $\partial U=\Gamma_+\cup\Gamma_-$,
we have
\[
d(x+it)\geq
\min\left\{a,\frac{h(x)-t}{\sqrt{1+L^2}}\right\}.
\]
Moreover, since $d(z)\geq r$ and
\[
h(x)-t\geq h(x)-y\geq d(z)\geq r,
\]
we obtain
\[
d(x+it)\geq
\min\left\{a,\frac{r}{\sqrt{1+L^2}}\right\}
=\frac{r}{\sqrt{1+L^2}},
\qquad 0\leq t\leq y.
\]

Let $\lambda_U$ denote the hyperbolic density of curvature $-1$ on
$U$. Since $U$ is simply connected, the holomorphic function
\(
H(z)=\frac{\pi}{2}+i\chi(z)
\)
maps $U$ into the right half-plane and satisfies
$\operatorname{Re}H=v$. Hence, by the Schwarz--Pick
lemma and the standard upper bound for the hyperbolic density (see
\cite[Theorems~6.4 and~8.2]{BeardonMinda}),  we have
\begin{equation*}
 |\nabla\log v(\zeta)|\leq\lambda_U(\zeta)
 \leq\frac2{d(\zeta)},\qquad\zeta\in U.
\end{equation*}
Integrating along the vertical segment, we obtain
\[
\begin{aligned}
 \log v(z)-\log v(x)
 &=\int_0^y\nabla\log v(x+it)\cdot(0,1)\,dt\\
 &\geq-\int_0^y|\nabla\log v(x+it)|\,dt\\
 &\geq-2\int_0^y\frac{dt}{d(x+it)}
 \geq-\frac{2y\sqrt{1+L^2}}{r}\geq-\frac{2b\sqrt{1+L^2}}{r}.
\end{aligned}
\]
Hence, for $z\in U_+$ with $d(z)\ge r$, 
\begin{equation*}
 v(z)\geq\frac\pi2\exp\left(-\frac{2b\sqrt{1+L^2}}{r}\right)=:m_0>0.
\end{equation*}

\noindent\textbf{Case 2:} $z=x+iy\in U_+$ with
$0<d(z)<r$.

Let $P$ be a nearest boundary point to $z$. Since
$\dist(z,\Gamma_-)\geq a>d(z)$, it follows that $P\in\Gamma_+$ and 
\(
z=P-d(z)\nu,
\)
where $\nu$ is the outward unit normal at $P$.
For $q=P-r\nu$ as above, we have
\[
|z-q|=r-d(z)<r.
\]
Since $d(q)\geq r$, from Case 1, we have $v(q)\geq m_0$.
Applying Harnack's inequality in $D(q,r)$,
we obtain
\[
\begin{aligned}
v(z)
&\ge \frac{r-|z-q|}{r+|z-q|}v(q)=\frac{d(z)}{2r-d(z)}v(q)
\ge \frac{m_0}{2r}d(z).
\end{aligned}
\]
Thus, for $z\in U_+$ with $d(z)<r$, 
\begin{equation*}
v(z)\geq\frac{m_0}{2r}d(z).
\end{equation*}
Finally, since $d(z)\leq h(x)-y\leq b$
for $z=x+iy\in U_+$,
and $2r<a\leq b$, combining
both two cases yields
\begin{equation}\label{eq:strip-v-distance}
v(z)\geq\frac{m_0}{b}d(z),\quad z\in U_+.
\end{equation}

By conformal invariance of the hyperbolic metric,
\[
|\chi'(z)|=\frac{\lambda_U(z)}{\lambda_S(\chi(z))}
=\lambda_U(z)\cos(\operatorname{Im}\chi(z))
=\lambda_U(z)\sin v(z).
\]
Since
\(
\lambda_U(z)\ge\frac{1}{2d(z)}
\) (see \cite[Theorem~8.6]{BeardonMinda})
and $\sin t\ge 2t/\pi$ for $0\le t\le\pi/2$,
we obtain
\[
|\chi'(z)|
\ge\frac{1}{2d(z)}\frac{2v(z)}{\pi}
\ge\frac{c}{\pi},
\qquad z\in U_+,
\] for some constant $c>0$ depending only on $a, b, L, M$.

By symmetry, the same estimate holds on
the lower half of $U$, completing the proof.
\end{proof}

\begin{lemma}\label[lemma]{lem:strip-area-growth}
Assume the hypotheses of \Cref{lem:strip-derivative}, and suppose in
addition that $h(x)<\pi$ for every $x$. Let
\(
 \Omega_\theta=e^{i\theta}\exp U,\ \text{for}\ \theta\in\R\ \text{and}\ 
 \tau(z)=\exp\chi(\operatorname{Log}(e^{-i\theta}z)),\ \text{for}\ z\in\Omega_{\theta},
\)
where $\operatorname{Log}$ denotes the principal branch with $\operatorname{Log}(e^{-i\theta}\Omega_\theta)=U$. Then $\tau:\Omega_\theta
\to\Hh$ is conformal, and for $0<p<q$,
\begin{equation*}
 \int_{\{z\in\Omega_\theta:p<\operatorname{Re}\tau(z)<q\}}
       \frac{\dd A(z)}{|z|^2}
 \leq\frac{\pi}{c^2}\log\frac qp.
\end{equation*}where $c>0$ is the constant from
\Cref{lem:strip-derivative}.
Moreover, 
\begin{equation}\label{eq:strip-vertical-growth}
 e^{\chi(x)}
 \leq\sup_{|y|<h(x)}\operatorname{Re}e^{\chi(x+iy)}
 \leq\exp\left(\chi(x)+\frac{\pi^2}{4ac}\right), \quad x\in\R
\end{equation}
and $|\tau(z_n)|\to\infty$ implies $|z_n|\to\infty$, for any sequence $(z_n)\subset\Omega_\theta$.
\end{lemma}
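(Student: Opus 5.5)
The plan is to factor $\tau$ as a composition of three conformal maps and transfer everything to the variable $\zeta=\operatorname{Log}(e^{-i\theta}z)\in U$. Since $h<\pi$, the strip $U$ has vertical width less than $2\pi$, so $\exp$ is injective on $U$. Hence $z\mapsto \operatorname{Log}(e^{-i\theta}z)$ is a conformal isomorphism $\Omega_\theta\to U$. Next, $\chi:U\to S$ is conformal, and $\exp$ maps $S=\{|\operatorname{Im}w|<\pi/2\}$ conformally onto $\Hh$. Composing, $\tau:\Omega_\theta\to\Hh$ is conformal.

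For the area estimate I will use the change of variables $z=e^{i\theta}e^{\zeta}$, under which $\dd A(z)/|z|^2=\dd A(\zeta)$. Setting $w=\chi(\zeta)$ gives $\dd A(\zeta)=|\chi'|^{-2}\dd A(w)\le c^{-2}\dd A(w)$ by \Cref{lem:strip-derivative}. The region $\{p<\operatorname{Re}e^{w}<q\}$ in $S$ is the preimage under $\exp$ of the half-plane slab $\{p<\operatorname{Re}\xi<q\}$. Using $\dd A(w)=\dd A(\xi)/|\xi|^2$, its area equals
\[
\int_p^q\int_{-\infty}^{\infty}\frac{\dd y\,\dd x}{x^2+y^2}=\pi\log\frac qp,
\]
exactly as in \Cref{lem:logarea}. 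This yields the bound $\frac{\pi}{c^2}\log\frac qp$.

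For \eqref{eq:strip-vertical-growth}, the lower bound comes from taking $y=0$, since $\chi(x)\in\R$ by the symmetry in \Cref{lem:strip-derivative}. For the upper bound, note $\operatorname{Re}e^{\chi}\le e^{\operatorname{Re}\chi}$, so it suffices to show $\operatorname{Re}\chi(x+iy)-\chi(x)\le \pi^2/(4ac)$. I will integrate $\chi'$ along the vertical segment from $x$ to $x+iy$. The key point is that $|\operatorname{Re}\chi(x+iy)-\chi(x)|$ should be controlled by the horizontal displacement, and the lower bound $|\chi'|\ge c$ is only useful for the inverse map. So I would instead apply the lower derivative bound to $\chi^{-1}:S\to U$, which has $|(\chi^{-1})'|\le 1/c$. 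Given the point $w=\chi(x+iy)$, consider the segment in $S$ from $w$ horizontally to the real point $\operatorname{Re}w$. Its image under $\chi^{-1}$ is a curve in $U$ joining $x+iy$ to a real point $x'$.

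Here I expect the main obstacle: relating $\operatorname{Re}w-\chi(x)=\chi(x')-\chi(x)$ to these quantities with the specific constant $\pi^2/(4ac)$. The fallback is a harmonic-measure or extremal-length comparison. Because $\chi$ is symmetric and increasing on $\R$, the vertical crosscut $\{x+it:|t|<h(x)\}$, of length at most $2b$, maps to a crosscut of $S$. I would bound its horizontal spread using the width $a$ of $U$ and the bound $|(\chi^{-1})'|\le 1/c$, aiming for a Beurling-type estimate that produces a displacement of at most $\pi\cdot(\pi/2)/(2ac)$.

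Finally, for the properness claim, suppose $|\tau(z_n)|\to\infty$. Then $|\operatorname{Re}\chi(\zeta_n)|\to\infty$, and since $\tau\in\Hh$ this must be $\operatorname{Re}\chi(\zeta_n)\to+\infty$. Since $|(\chi^{-1})'|\le 1/c$ and $\chi$ preserves ends, this forces $\operatorname{Re}\zeta_n\to+\infty$ rather than $-\infty$, because $-\infty$ corresponds to $\operatorname{Re}\chi\to-\infty$ and bounded $\operatorname{Re}\zeta$ corresponds to a bounded image. Hence $|z_n|=e^{\operatorname{Re}\zeta_n}\to\infty$.
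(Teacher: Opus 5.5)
\paragraph{Review.} Your conformality argument and your logarithmic area bound are correct. They coincide with the paper's proof: pull back by $z=e^{i\theta}e^\zeta$, then by $\psi=\chi^{-1}$ with $|\psi'|\le 1/c$, and compute the area of $\{s+it:|t|<\pi/2,\ p<e^s\cos t<q\}$ as $\pi\log(q/p)$.

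\paragraph{The gap.} The upper bound in \eqref{eq:strip-vertical-growth} is not proved. You set up the right curve, but then you call the comparison with the constant $\pi^2/(4ac)$ ``the main obstacle'' and fall back on an unspecified Beurling or extremal-length estimate. What you are missing is a Lipschitz bound for $\chi$ on the real axis, and with it the argument is short.

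Write $\chi(x+iy)=s+it$ and integrate $\psi'$ along the segment from $s$ to $s+it$. This segment is vertical, not horizontal, and has length $|t|<\pi/2$. It gives
\[
|x-\psi(s)|\le|\psi(s+it)-\psi(s)|\le \frac{\pi}{2c},
\]
which you essentially already have. Next, $\chi(\R)=\R$ and the hyperbolic density of $S$ equals $1$ on $\R$, so conformal invariance gives $\chi'(x)=\lambda_U(x)$. The inclusion $\{|\operatorname{Im}z|<a\}\subset U$ and monotonicity of the hyperbolic density then give $\chi'(x)\le \pi/(2a)$ on $\R$. By the mean value theorem on $\R$,
\[
|s-\chi(x)|=|\chi(\psi(s))-\chi(x)|\le\frac{\pi}{2a}\cdot\frac{\pi}{2c}.
\]
Combined with $\operatorname{Re}e^{w}\le e^{\operatorname{Re}w}$, this is exactly the claimed bound. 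No harmonic-measure argument is needed; the factor $\pi/(2a)$ is just the sup of $\chi'$ on the real line.

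\paragraph{The last claim.} Your step ``bounded $\operatorname{Re}\zeta$ corresponds to a bounded image'' is asserted rather than justified. The bound $|\psi'|\le1/c$ only controls how fast $\psi$ can move, not how large $\chi$ can be on bounded sets. The same estimate as above closes it in one line. Put $\zeta_n=\operatorname{Log}(e^{-i\theta}z_n)$ and $s_n=\operatorname{Re}\chi(\zeta_n)=\log|\tau(z_n)|\to\infty$. Then
\[
\log|z_n|=\operatorname{Re}\zeta_n\ge\psi(s_n)-\frac{\pi}{2c}\to\infty,
\]
because $\psi$ is increasing on $\R$ and $\psi(s)\to+\infty$ as $s\to+\infty$ by preservation of ends. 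This is the paper's argument.
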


\begin{proof}
Since $U\subset\{z:|\operatorname{Im}z|<\pi\}$, the exponential is
injective on $U$. Also $\exp:S\to\Hh$ is conformal. This proves the
first assertion. 

Let $\psi=\chi^{-1}$, by
\Cref{lem:strip-derivative}, $|\psi'|\leq c^{-1}$ on $S$.
The changes of variables $z=e^{i\theta}e^\zeta$ and
$w=\chi(\zeta)=s+it$ give
\[
 \int_{\{z\in\Omega_\theta:p<\operatorname{Re}\tau(z)<q\}}
       \frac{\dd A(z)}{|z|^2}=\int_{W_{p,q}}|\psi'(w)|^2\dd A(w)\le c^{-2}\operatorname{area}(W_{p,q})=c^{-2}\pi\log\frac{q}{p},\]
 where \(
 W_{p,q}=\{s+it:|t|<\pi/2,\ p<e^s\cos t<q\}
\).

Next, by conformal invariance of the hyperbolic metric
and the inclusions
\[
\{z:|\operatorname{Im}z|<a\}
\subset U\subset
\{z:|\operatorname{Im}z|<b\},
\] we have
\begin{equation*}
 \chi'(x)=\lambda_U(x),\ \text{and}\
 \frac{\pi}{2b}\leq \chi'(x)\leq\frac{\pi}{2a}.
\end{equation*}
If $\chi(x+iy)=s+it$, integration of $\psi'$ along the vertical
segment from $s$ to $s+it$ gives
\[
 |x-\psi(s)|\leq|\psi(s+it)-\psi(s)|\leq\frac{\pi}{2c}.
\]
Since $\chi(\psi(s))=s$, it follows that
$$|s-\chi(x)|=|\chi(\psi(s))-\chi(x)|\leq\frac{\pi^2}{4ac}.$$ This proves the upper bound in
\eqref{eq:strip-vertical-growth}. The lower bound follows by taking
$y=0$. 

Finally, suppose that $\{z_n\}\subset\Omega_\theta$
satisfies $|\tau(z_n)|\to\infty$.
Since
\[
\chi\bigl(\text{Log}(e^{-i\theta}z_n)\bigr)
=s_n+it_n,
\]
we have $s_n=\log|\tau(z_n)|\to\infty$.
Hence,
\[
\log|z_n|
\geq\psi(s_n)-\frac{\pi}{2c}
\longrightarrow\infty,
\]
since $\psi(s)\to\infty$ as $s\to\infty$
by preservation of the two ends.
This completes the proof.
\end{proof}

\section{The inverse problem on class \texorpdfstring{$\Bcal$}{B} for finite orders}\label{sec:realB}

We now prove \Cref{thm:arcs-main}. The power sector models give the case
$\rho=\rho_E$. For $\rho>\rho_E$, we construct strips whose widths are
constant except on a sequence of widely separated intervals. We then
complete both constructions by the same scaling argument.

\subsection{The case \texorpdfstring{$\rho=\rho_E$}{rho = rho E}}\label{sec:rho-equal}
Recall the power sector model constructed in
Section~\ref{sec:B}. By \Cref{lem:logarea},
the hypotheses of \Cref{thm:bishop} are satisfied.
Applying that theorem, we obtain
\[
Q=f_0\circ\phi.
\]

We first estimate the growth of $Q$.
On each $T_j(2)$, we have
\[
Q(z)=\exp(P_j(z)-b),
\]
while $|Q(z)|\leq e^2$ outside $\Omega(2)$.
Choose an index $j$ such that $p_j=\rho_E$.
For sufficiently large $r$, we have
\[re^{i\theta_j}\in T_j(2),\quad \text{and hence}\quad
\log|Q(re^{i\theta_j})|
=r^{\rho_E}-b.
\]
This gives
\[
\log M(r,Q)\geq r^{\rho_E}-b.
\]
On the other hand, for $z\in T_j(2)$ with
$|z|=r\geq1$, we have
\[
\log|Q(z)|
=\operatorname{Re}P_j(z)-b
\leq r^{p_j}
\leq r^{\rho_E}.
\]
Together with the bound outside $\Omega(2)$,
this yields
\[
r^{\rho_E}-b
\leq\log M(r,Q)
\leq\max\{2,r^{\rho_E}\}
\]
for all sufficiently large $r$. Consequently,
\[
\lim_{r\to\infty}
\frac{\log\log M(r,Q)}{\log r}
=\rho_E.
\]

The proof of \Cref{lem:sector-direction} shows that,
for every $j$ and every
$\theta=\theta_j+\delta$ with $|\delta|<b_j$,
the ray $z_r=re^{i\theta}$ eventually lies in
$T_j(2)$, and
\[
\operatorname{Re}\tau(z_r)
=\operatorname{Re}P_j(z_r)-b
\longrightarrow\infty
\qquad(r\to\infty).
\]

\subsection{The case \texorpdfstring{$\rho>\rho_E$}{rho > rho E}}
We use the notation for the arcs
from Section~\ref{sec:power-models}. 
Let
\(
a=\pi/(2\rho).
\)
Then $$0<a<b_j\leq\pi$$ for every $j$, with
$b_j=\pi$ possible only when $m=1$ and $E=\T$.
To apply \Cref{lem:strip-derivative,lem:strip-area-growth}, we construct smooth variable width strips with uniformly bounded first and second derivatives of their half width functions. We arrange that these functions equal \(a\) outside a sequence of widely separated intervals, while approaching \(b_j\) along a sequence tending to infinity. 

Choose $\beta\in C_c^\infty((-1,1))$ such that
\[
0\leq\beta(x)\leq1 \quad (x\in\R),\quad\text{and}
\quad
\beta(x)=1 \quad (|x|\leq1/2).
\]
For $n\geq2$, let
\[
x_n=2^{n^2},\qquad d_n=n^2,\qquad
b_{j,n}=a+(b_j-a)\frac{n}{n+1},
\]
and define
\[
\beta_n:=\beta\left(\frac{x-x_n}{d_n}\right)\quad \text{and}\quad 
h_j(x):=a+\sum_{n\geq2}(b_{j,n}-a)\beta_n.
\]
Since the intervals $(x_n-d_n,x_n+d_n)$
are pairwise disjoint, each $h_j$ is smooth
and satisfies
\[
a\leq h_j(x)<b_j\leq\pi,\qquad
h_j(x_n)=b_{j,n}\longrightarrow b_j.
\]
Moreover, $h_j(x)=a$ for $x\leq0$. Since the supports
\[
\operatorname{supp}\left(\beta_n\right)
\subset (x_n-d_n,x_n+d_n)
\]
are pairwise disjoint and $d_n\geq4$, we have
\[
\|h_j'\|_\infty\leq\sup_n\frac{b_{j,n}-a}{d_n}\|\beta'\|_{\infty}
\leq\frac{\pi-a}{4}\|\beta'\|_\infty,
\] and
\[
\|h_j''\|_\infty\le
\sup_n\frac{b_{j,n}-a}{d_n^2}
\|\beta''\|_{\infty}
\leq\frac{\pi-a}{16}\|\beta''\|_\infty.
\]
Thus the first and second derivatives of $h_j$
are uniformly bounded, independently of $j$.

Let
\(
U_j=\{x+iy:|y|<h_j(x)\},
\)
and let $\chi_j:U_j\to S$ be the conformal
map normalized as in \Cref{lem:strip-derivative},
where $S=\{w:|\operatorname{Im}w|<\pi/2\}$.
By \Cref{lem:strip-derivative}, there exists
a constant $c>0$, independent of $j$, such that
\[
|\chi_j'(z)|\geq c,\qquad z\in U_j.
\]

For the growth estimates, we use the following
symmetric form of Warschawski's strip asymptotic
theorem \cite[Theorem~IX, p.~313]{Warschawski}.

\begin{lemma}[Warschawski]
\label[lemma]{lem:warschawski-strip}
Assume the hypotheses of \Cref{lem:strip-derivative} and suppose in addition that
\[
 h'(x)\longrightarrow0\quad(x\to+\infty),\qquad
 \int_0^\infty |h''(t)|\dd t<\infty,\qquad
 \int_0^\infty\frac{h'(t)^2}{h(t)}\dd t<\infty.
\]
Then there is a constant $\kappa\in\R$ such that
\[
 \lim_{x\to+\infty}\sup_{|y|<h(x)}
 \left|\chi(x+iy)-\frac{\pi}{2}\int_0^x\frac{\dd t}{h(t)}
       -\kappa-i\frac{\pi y}{2h(x)}\right|=0.
\]
\end{lemma}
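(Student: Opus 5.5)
This lemma is essentially a specialization of Warschawski's classical theorem on conformal maps of strip-like domains. The plan is to reduce to that theorem and check that its hypotheses follow from the ones assumed here.

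\textbf{Step 1: symmetry and centreline.} Warschawski's setting is a strip domain $\{x+iy:\varphi_-(x)<y<\varphi_+(x)\}$ mapped onto the standard strip $S$ with the two ends preserved. We take $\varphi_\pm=\pm h$. By \Cref{lem:strip-derivative}, $\chi(\bar z)=\overline{\chi(z)}$. Hence the centreline of the width function is identically $0$. This removes the terms involving the derivative of the midline, so only terms in the width $2h$ remain.

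\textbf{Step 2: verify Warschawski's hypotheses.} His asymptotic formula needs the following:
\begin{itemize}
\item[(i)] the width $\theta(x)=2h(x)$ is bounded above and below, which holds since $2a\le\theta\le 2b$;
\item[(ii)] $\theta'(x)\to0$ as $x\to+\infty$;
\item[(iii)] $\int^\infty \theta'^2/\theta\,\dd t<\infty$;
\item[(iv)] a regularity condition on the boundary curves, namely $\int^\infty|\theta''|\,\dd t<\infty$, or equivalently that $\theta'$ has bounded variation.
\end{itemize}
Conditions (ii)--(iv) are exactly the three hypotheses stated in the lemma. The global bounds $|h'|\le L$ and $|h''|\le M$ give the local smoothness he assumes, in particular Lipschitz boundary curves.

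\textbf{Step 3: read off the conclusion.} Warschawski's conclusion is that $\operatorname{Re}\chi(x+iy)-\pi\int_0^x\dd t/\theta(t)$ tends to a constant, uniformly in $y$. With $\theta=2h$ this gives the term $\frac{\pi}{2}\int_0^x\dd t/h$. For the imaginary part, he gives $\operatorname{Im}\chi(x+iy)\to \pi\bigl(y-\text{midline}\bigr)/\theta(x)$ up to $o(1)$, uniformly. Since the midline vanishes, this is $\pi y/(2h(x))$.

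\textbf{Main obstacle.} The hard step is justifying the uniformity of the imaginary-part asymptotics in $y$, because some versions of the theorem state only the real-part asymptotics. If the cited version does not give it, I would argue directly, as follows.
\begin{itemize}
\item Set $\kappa=\lim_{x\to+\infty}\bigl(\chi(x)-\frac{\pi}{2}\int_0^x\dd t/h\bigr)$ along the real axis; Warschawski gives the existence of this limit.
\item Take $x_n\to\infty$ and rescale near $x_n$. Because $h'\to0$ and $h''$ is bounded, the domains $U-x_n$ converge locally, in the Carath\'eodory kernel sense, to straight strips $|y|<h_\infty$ on compact sets. The maps $\chi(\cdot+x_n)-\chi(x_n)$ then converge locally uniformly to the linear map $z\mapsto \pi z/(2h(x_n))$, up to $o(1)$.
\item This convergence is uniform up to the boundary, by Lipschitz control of the boundary together with Kellogg/Warschawski boundary regularity for $C^{1,\alpha}$ curves with uniform constants.
\item Combining the linear limit with the real-part asymptotics yields the stated uniform limit.
\end{itemize}
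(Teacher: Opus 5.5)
Your proposal matches the paper's argument. The paper likewise takes $\varphi_\pm=\pm h$, so the width is $2h$ and the midline is $0$. It checks that $h'\to0$ makes $U$ an $L$-strip with boundary inclination zero, that $\int_0^\infty|h''|<\infty$ gives finite boundary turning, and that $\int_0^\infty ((2h)')^2/(2h)=2\int_0^\infty h'^2/h<\infty$, and then cites Warschawski's Theorems VII and IX. Your fallback rescaling argument is unnecessary, because Theorem IX already gives the expansion of $\chi$ uniformly in $y$, including the imaginary part.
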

In Warschawski's notation, the boundary functions
are $\varphi_\pm=\pm h$, so the width is $2h$
and the midpoint is $0$. The condition $h'(x)\to0$
implies that $U$ is an $L$-strip with boundary
inclination zero, while
$\int_0^\infty |h''(t)|\dd t<\infty$
ensures finite boundary turning in the sense
of Warschawski. Moreover,
\[
\int_0^\infty
\frac{((2h)'(t))^2}{2h(t)}\dd t
=
2\int_0^\infty\frac{h'(t)^2}{h(t)}\dd t<\infty.
\]
Thus the hypotheses of
\cite[Theorem~VII, pp.~310--311]{Warschawski}
are satisfied, and the conclusion follows from
\cite[Theorem~IX, p.~313]{Warschawski}.

\begin{lemma}\label[lemma]{lem:width-growth}
For every $j$,
\[
 \frac{\chi_j(x)}x\longrightarrow\rho\qquad(x\to+\infty).
\]
\end{lemma}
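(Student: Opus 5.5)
The plan is to read off the asymptotics of $\chi_j$ on the real axis from Warschawski's strip theorem (\Cref{lem:warschawski-strip}) with $h=h_j$, and then to show that the bumps of $h_j$ change $\int_0^x\dd t/h_j(t)$ only by $o(x)$. Since $a=\pi/(2\rho)$, the constant part $h_j\equiv a$ contributes exactly $\frac{\pi}{2a}x=\rho x$.

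\textbf{Step 1: verify Warschawski's hypotheses.} On the support of $\beta_n$, the interval $(x_n-d_n,x_n+d_n)$, we have
\[
|h_j'|\le \frac{(b_j-a)\|\beta'\|_\infty}{d_n},\qquad
|h_j''|\le \frac{(b_j-a)\|\beta''\|_\infty}{d_n^2},
\]
and $h_j'=h_j''=0$ off $\bigcup_n \operatorname{supp}\beta_n$. Since the supports are disjoint and $d_n=n^2\to\infty$, we get $h_j'(x)\to0$ as $x\to+\infty$. Each bump has length at most $2d_n$, so
\[
\int_0^\infty|h_j''|\dd t\le C\sum_{n\ge2}\frac{2d_n}{d_n^2}=2C\sum_{n\ge2} n^{-2}<\infty,
\]
\[
\int_0^\infty\frac{h_j'(t)^2}{h_j(t)}\dd t\le \frac{C^2}{a}\sum_{n\ge2}\frac{2d_n}{d_n^2}<\infty .
\]
The hypotheses of \Cref{lem:strip-derivative} ($a\le h_j\le b_j$, and bounded $h_j'$, $h_j''$) were already checked in the construction. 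Taking $y=0$ in \Cref{lem:warschawski-strip} then gives
\[
\chi_j(x)=\frac{\pi}{2}\int_0^x\frac{\dd t}{h_j(t)}+\kappa_j+o(1)\qquad(x\to+\infty).
\]

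\textbf{Step 2: the bumps cost only $o(x)$.} Write
\[
\frac{\pi}{2}\int_0^x\frac{\dd t}{h_j}=\rho x-\frac{\pi}{2}\int_0^x\Bigl(\frac1a-\frac1{h_j(t)}\Bigr)\dd t .
\]
The integrand lies in $[0,1/a]$ and vanishes outside the bump intervals. Let $N(x)$ be the number of $n\ge2$ with $x_n-d_n<x$. Because $x_n-d_n\ge 2^{n^2-1}$, we have $N(x)\le \sqrt{\log_2 x+1}$. The correction term is therefore at most
\[
\frac{\pi}{2a}\sum_{n\le N(x)+1}2n^2=O\bigl(N(x)^3\bigr)=O\bigl((\log x)^{3/2}\bigr)=o(x).
\]
Dividing by $x$ gives $\chi_j(x)/x\to\rho$.

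\textbf{Main obstacle.} The only delicate point is Step 1, namely confirming that the finite-turning and finite-energy conditions of Warschawski's theorem survive infinitely many bumps. This is exactly why the widths were chosen as $d_n=n^2$: then $\sum 1/d_n<\infty$ controls both integrals. At the same time, the extremely sparse centres $x_n=2^{n^2}$ make the total bump length up to $x$ negligible compared with $x$.

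If one preferred to avoid Warschawski, a fallback would be the comparison $\chi_j'(x)=\lambda_{U_j}(x)$ together with local comparison with straight strips away from the bumps. That route needs extra distortion control near the transitions, so I would use \Cref{lem:warschawski-strip}.
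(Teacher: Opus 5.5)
Your proposal is correct and follows essentially the same route as the paper. You verify Warschawski's three hypotheses from the disjoint bumps of width $d_n=n^2$, bounding each integral by sup times length where the paper changes variables, which is equivalent. You then take $y=0$ in the expansion and show that the bump set in $[0,x]$ has measure $O(N^3)=o(x)$; your explicit bound $N(x)\le\sqrt{\log_2 x+1}$ is simply a quantified form of the paper's comparison $X\ge 2^{N^2}-N^2$.
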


\begin{proof}
We first verify the hypotheses of \Cref{lem:warschawski-strip}.
 On the $n$th support interval of $\beta_n$,
\[
 |h_j'(x)|\leq\frac{b_{j,n}-a}{d_n}\|\beta'\|_\infty
 \leq\frac{b_j-a}{n^2}\|\beta'\|_\infty.
\]
Since $h_j'=0$ outside \(\bigcup_{n\ge2}[x_n-d_n,x_n+d_n],
\) it follows
that $h_j'(x)\to0$ as $x\to+\infty$.
Moreover, the supports are pairwise disjoint,
and a change of variables gives
\[
\begin{aligned}
\int_0^\infty |h_j''(x)|\,dx
&=\sum_{n\ge2}
\frac{b_{j,n}-a}{d_n^2}
\int_{x_n-d_n}^{x_n+d_n}
\left|
\beta''\left(\frac{x-x_n}{d_n}\right)
\right|dx\\
&=\sum_{n\geq2}\frac{b_{j,n}-a}{d_n}\int_{-1}^1|\beta''(t)|dt\\
&
\le (b_j-a)\sum_{n\ge2}\frac1{n^2}\int_{-1}^1|\beta''(t)|dt
<\infty.
\end{aligned}
\]
and
\[
\begin{aligned}
\int_0^\infty\frac{h_j'(x)^2}{h_j(x)}\,dx
&\le\frac1a\int_0^\infty |h_j'(x)|^2\,dx=\frac{1}{a}\sum_{n\ge2}\frac{(b_{j,n}-a)^2}{d_n^2}
\int_{x_n-d_n}^{x_n+d_n}
\left|
\beta'\left(\frac{x-x_n}{d_n}\right)
\right|^2dx\\
&=
\frac 1a\sum_{n\ge2}\frac{(b_{j,n}-a)^2}{d_n}\int_{-1}^1|\beta'(t)|^2\,dt\le\frac{(b_j-a)^2}{a}\sum_{n\ge2}\frac1{n^2}\int_{-1}^1|\beta'(t)|^2\,dt<\infty.
\end{aligned}
\]

By \Cref{lem:warschawski-strip}, there exists
$\kappa_j\in\R$ such that
\begin{equation}\label{eq:warschawski-expansion}
\chi_j(x+iy)
=
\frac{\pi}{2}\int_0^x h_j(t)^{-1}\dd t+\kappa_j
+i\frac{\pi y}{2h_j(x)}
+\varepsilon_j(x,y),
\end{equation}
where
\begin{equation}\label{eq:warschawski-uniform-error}
E_j(x):=
\sup_{|y|<h_j(x)}|\varepsilon_j(x,y)|
\longrightarrow0
\qquad(x\to+\infty).
\end{equation}

It remains to show that $$\displaystyle\frac{\pi}{2x}\int_0^x h_j(t)^{-1}\dd t\to\rho.$$
Let
\[
B=\bigcup_{n\geq2}[x_n-d_n,x_n+d_n].
\]
For sufficiently large $X$, let $N$ be the
largest integer such that $x_N-d_N\leq X$.
Then
\[
\operatorname{meas}(B\cap[0,X])
\leq 2\sum_{n=2}^N n^2
=O(N^3).
\]
Since
\[
X\geq x_N-d_N=2^{N^2}-N^2,
\]
we obtain
\begin{equation*}
\operatorname{meas}(B\cap[0,X])=o(X).
\end{equation*}

Since $h_j=a$ outside $B$ and $h_j\geq a$,
and recalling that $\rho=\pi/(2a)$, we have
\[
\begin{aligned}
0\leq \rho x-\frac{\pi}{2}\int_0^x h_j(t)^{-1}\dd t
&=\frac{\pi}{2}
\int_{B\cap[0,x]}
\left(\frac1a-\frac1{h_j(t)}\right)\dd t\\
&\leq
\rho\,\operatorname{meas}(B\cap[0,x])
=o(x).
\end{aligned}
\]
Thus, $\displaystyle\frac{\pi}{2x}\int_0^x h_j(t)^{-1}\dd t\to\rho$.
Taking $y=0$ in \eqref{eq:warschawski-expansion}
gives the desired conclusion.
\end{proof}
In particular, \eqref{eq:warschawski-expansion}
and \eqref{eq:warschawski-uniform-error} imply that
\begin{equation}\label{eq:strip-uniform-asymptotic}
\sup_{|y|<h_j(x)}
\left|
\operatorname{Re}\chi_j(x+iy)-\chi_j(x)
\right|
\longrightarrow0
\qquad(x\to+\infty).
\end{equation}

We now use the variable width strips constructed above to build a model for the case \(\rho>\rho_E\). 
For each $j$, let
\[
\Omega_j=e^{i\theta_j}\exp U_j,
\qquad
\tau_j(z)=\exp\chi_j\bigl(
\operatorname{Log}(e^{-i\theta_j}z)\bigr),
\]
and define
\[
\Omega=\bigcup_{j=1}^m\Omega_j,
\qquad
\tau|_{\Omega_j}=\tau_j.
\]
Since $h_j(x)<b_j=\ell_j/2$, each $\Omega_j$
is contained in the sector with central direction
$\theta_j$ and opening $\ell_j$.
Hence the domains $\Omega_j$ are pairwise disjoint.
Moreover, the boundary of each $\Omega_j$ consists
of two curves meeting at the origin and is therefore
connected in $\C$.

By \Cref{lem:strip-area-growth}, each
$\tau_j:\Omega_j\to\Hh$ is conformal and satisfies
the required condition at infinity.
Thus $(\Omega,e^\tau)$ defines a model.
Furthermore, the same lemma gives
\[
\int_{\Omega(1/2,2)}
\frac{\dd A(z)}{|z|^2}
\leq\frac{m\pi}{c^2}\log4<\infty.
\]
Hence \Cref{thm:bishop} applies, yielding
$Q=f_0\circ\phi$.

\begin{lemma}\label[lemma]{lem:variable-model-growth}
Let
\[
A(x)=\max_{1\leq j\leq m}\chi_j(x).
\]
Then
\begin{equation}\label{eq:variable-model-sharp-growth}
\log\log M(e^x,Q)=A(x)+o(1)
\qquad(x\to+\infty).
\end{equation}
In particular,
\[
\lim_{r\to\infty}
\frac{\log\log M(r,Q)}{\log r}=\rho.
\]
Moreover, for every $j$ and every
$\theta=\theta_j+y$ with $|y|<b_j$,
the sequence $z_n=e^{x_n+i\theta}$ eventually
lies in $\Omega_j$, with $|z_n|\to\infty$
and $\arg z_n=\theta\pmod{2\pi}$, and satisfies
\begin{equation*}
\operatorname{Re}\tau_j(z_n)
=e^{\chi_j(x_n)}
\left(\cos\left(\frac{\pi y}{2b_j}\right)+o(1)\right)
\longrightarrow\infty.
\end{equation*}
\end{lemma}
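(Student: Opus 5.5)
The plan is to read off $\log|Q|$ from the model: on $\Omega(2)$ we have $Q=\exp\circ\tau$, and $|Q|\le e^2$ elsewhere. On $\Omega_j$, writing $z=e^{i\theta_j}e^{x+iy}$ with $x+iy\in U_j$, we get
\[
\log|Q(z)|=\operatorname{Re}\tau_j(z)=\operatorname{Re}e^{\chi_j(x+iy)}.
\]
So $\log M(e^x,Q)$ equals $\max\{O(1),\max_j\sup_{|y|<h_j(x)}\operatorname{Re}e^{\chi_j(x+iy)}\}$. The only caveat is points of $\Omega_j\setminus\Omega(2)$, where $\log|Q|\le 2$; these are harmless once the supremum tends to infinity.

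For the lower bound, take $y=0$. Since $\chi_j(x)\to\infty$ by \Cref{lem:width-growth}, the point $e^{i\theta_j}e^x$ lies in $\Omega_j(2)$ for large $x$. This gives $\log M(e^x,Q)\ge e^{\chi_j(x)}$ for every $j$, hence $\log\log M(e^x,Q)\ge A(x)$.

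For the upper bound, write $\chi_j(x+iy)=s+it$ with $|t|<\pi/2$. Then
\[
\operatorname{Re}e^{\chi_j}=e^s\cos t\le e^s .
\]
By \eqref{eq:strip-uniform-asymptotic}, $s\le\chi_j(x)+o(1)$ uniformly in $|y|<h_j(x)$. Since there are finitely many $j$, we obtain $\log\log M(e^x,Q)\le A(x)+o(1)$, which proves \eqref{eq:variable-model-sharp-growth}. The order statement follows from \Cref{lem:width-growth}: each $\chi_j(x)/x\to\rho$, so $A(x)/x\to\rho$, and with $r=e^x$ this gives $\log\log M(r,Q)/\log r\to\rho$. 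Note that for each $r$ we can take $x=\log r$, so we get a genuine limit and not merely a limsup.

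For the last assertion, fix $j$ and $|y|<b_j$. Since $h_j(x_n)=b_{j,n}\to b_j$, we have $|y|<h_j(x_n)$ for large $n$. Hence $x_n+iy\in U_j$, and $z_n=e^{i\theta_j}e^{x_n+iy}\in\Omega_j$, with $|z_n|=e^{x_n}\to\infty$ and $\arg z_n=\theta$.

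Apply \eqref{eq:warschawski-expansion} at $(x_n,y)$:
\[
\chi_j(x_n+iy)=\chi_j(x_n)+i\frac{\pi y}{2b_{j,n}}+o(1).
\]
The real part is handled by \eqref{eq:strip-uniform-asymptotic}. The imaginary part differs from $\pi y/(2b_{j,n})$ by at most $E_j(x_n)$, and this term is itself the real-part difference plus the $y=0$ error; both tend to $0$. Since $b_{j,n}\to b_j$, we get $\operatorname{Im}\chi_j(x_n+iy)\to\pi y/(2b_j)$. Exponentiating,
\[
\operatorname{Re}\tau_j(z_n)=e^{\chi_j(x_n)+o(1)}\Bigl(\cos\frac{\pi y}{2b_j}+o(1)\Bigr),
\]
and $e^{o(1)}=1+o(1)$ is absorbed into the bracket because the cosine is bounded. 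This tends to infinity, since $\cos(\pi y/(2b_j))>0$ and $\chi_j(x_n)\to\infty$.

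The main obstacle is the uniformity in the upper bound. One must control $\operatorname{Re}\chi_j$ over the whole cross-section, not only on the real axis. The cruder bound \eqref{eq:strip-vertical-growth} loses the constant $\pi^2/(4ac)$, which is enough for the order but not for the $o(1)$ sharp form. The plan therefore relies on the uniform Warschawski error \eqref{eq:warschawski-uniform-error}, which is available because its hypotheses were verified in \Cref{lem:width-growth}.
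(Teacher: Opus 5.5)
Your proposal is correct and follows essentially the same route as the paper: the lower bound comes from the point $y=0$, the upper bound from the uniform Warschawski asymptotic \eqref{eq:strip-uniform-asymptotic}, and the final assertion from the expansion \eqref{eq:warschawski-expansion} at $x_n$, using $h_j(x_n)=b_{j,n}\to b_j$. The one difference is that you bound $\log|Q|$ only on the circle $|z|=e^x$. The paper works on the disc $|z|\le e^x$, so it needs the monotonicity of $\chi_j$ on $\mathbb{R}$ and a separate case $s<T$ handled by \eqref{eq:strip-vertical-growth}; your version avoids both and is equally valid.
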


\begin{proof}
By \Cref{lem:width-growth}, we have
$A(x)/x\to\rho$ as $x\to+\infty$.
Since $\chi_j(x)\to\infty$ for every $j$,
the points $e^{x+i\theta_j}$ belong to
$\Omega_j(2)$ for all sufficiently large $x$.
Thus \eqref{eq:model-agreement} gives
\[
\log M(e^x,Q)
\geq\max_{1\leq j\leq m}
\operatorname{Re}\tau_j(e^{x+i\theta_j})
=e^{A(x)}.
\]
To obtain the upper bound, fix $\eta>0$.
By \eqref{eq:strip-uniform-asymptotic},
there exists $T>0$ such that
\[
\operatorname{Re}\chi_j(s+iy)
\leq\chi_j(s)+\eta
\]
for all $j\ge1$, $s\geq T$, and $|y|<h_j(s)$.

For $z\in\Omega_j(2)$ with $|z|\leq e^x$,
write
\[
\operatorname{Log}(e^{-i\theta_j}z)=s+iy,
\qquad s\leq x.
\]
If $s\geq T$, then the monotonicity of
$\chi_j$ on $\R$ gives
\[
\begin{aligned}
\log|Q(z)|
&=\operatorname{Re}\tau_j(z)\leq e^{\operatorname{Re}\chi_j(s+iy)}
\leq e^{\chi_j(s)+\eta}
\leq e^{A(x)+\eta}.
\end{aligned}
\]
If $s<T$, then \eqref{eq:strip-vertical-growth}
gives
\[
\log|Q(z)|
=\operatorname{Re}\tau_j(z)
\leq e^{\chi_j(s)+K}
\leq e^{A(T)+K},
\qquad K=\frac{\pi^2}{4ac}.
\]
Since $|Q|\leq e^2$ outside $\Omega(2)$
by \eqref{eq:model-agreement}, we obtain
\[
e^{A(x)}
\leq\log M(e^x,Q)
\leq\max\{2,e^{A(T)+K},e^{A(x)+\eta}\}.
\]
As $A(x)\to\infty$, it follows that
\[
0\leq
\log\log M(e^x,Q)-A(x)
\leq\eta
\]
for all sufficiently large $x$.
Since $\eta>0$ is arbitrary,
\eqref{eq:variable-model-sharp-growth} follows.
Together with $A(x)/x\to\rho$,
this gives the asserted order limit.

Finally, fix $j$ and
$\theta=\theta_j+y$ with $|y|<b_j$.
Since $h_j(x_n)=b_{j,n}\to b_j$,
we have $x_n+iy\in U_j$ for all
sufficiently large $n$, and hence
\[
z_n=e^{x_n+i\theta}\in\Omega_j.
\]
By \eqref{eq:warschawski-expansion}
and \eqref{eq:warschawski-uniform-error},
\[
\begin{aligned}
\chi_j(x_n+iy)-\chi_j(x_n)
&=\frac{\pi i y}{2h_j(x_n)}+o(1)\longrightarrow\frac{\pi i y}{2b_j}.
\end{aligned}
\]
Therefore,
\[
\begin{aligned}
\frac{\tau_j(z_n)}{e^{\chi_j(x_n)}}
&=\exp\bigl(
\chi_j(x_n+iy)-\chi_j(x_n)
\bigr)\longrightarrow
\exp\left(\frac{\pi i y}{2b_j}\right).
\end{aligned}
\]
Taking real parts yields
\[
\operatorname{Re}\tau_j(z_n)
=e^{\chi_j(x_n)}
\left(
\cos\left(\frac{\pi y}{2b_j}\right)+o(1)
\right).
\]
Since $|y|<b_j$ and
$\chi_j(x_n)\sim\rho x_n\to\infty$,
it follows that $\operatorname{Re}\tau_j(z_n)\to\infty$. This completes the proof.
\end{proof}

\subsection{Completion of the proof}

In both constructions, the model has $m$ components $\Omega_j$,
and every point of $\Omega_j$ has its argument in $I_j$. By Section~\ref{sec:rho-equal} and \Cref{lem:variable-model-growth}, for each direction
$\theta=\theta_j+y$ with $|y|<b_j$, there are $z_n\in\Omega_j$
with
\begin{equation}\label{eq:model-direction-sequences}
 |z_n|\to\infty,\qquad\arg z_n\to\theta,\qquad
 \operatorname{Re}\tau_j(z_n)\to\infty.
\end{equation}
Closedness therefore gives
\begin{equation}\label{eq:all-model-directions}
 L(\Omega_j(s))=I_j\quad(s>0),\qquad L(\Omega(s))=E.
\end{equation}
When $E=\T$, the directions with $|y|<\pi$ are dense in the circle,
so this argument includes the opposite direction as well.

In either case we have $Q=f_0\circ\phi$, $\phi(z)=z+o(|z|)$, and
\[
 \frac{\log\log M(r,Q)}{\log r}\longrightarrow\rho.
\]
For all sufficiently large $r$,
\[
 \phi(D(0,r/2))\subset D(0,r)\subset\phi(D(0,2r)).
\]
It is not hard to see that
\[
  M(r/2,Q)\leq M(r,f_0)\leq M(2r,Q),
\]
and hence, $\lambda(f_0)=\rho(f_0)=\rho$.

Choose $\eta>0$ so small that
\begin{equation*}
 \eta e^2<\frac12,\qquad
 \eta M(1,f_0)<\frac12,
\end{equation*}
and put $f=\eta f_0$. As in the proof of \Cref{thm:compact-main}, \( 
 f(\overline\D)\subset D(0,1/2)\subset\D,
\)
 and $f$ satisfies
$\lambda(f)=\rho(f)=\rho$.

If $w\notin\phi(\Omega(2))$, then $|f(w)|\leq\eta e^2<1/2$,
so
\(
\J(f)\subset\phi(\Omega(2)).
\)
By \eqref{eq:phi-directions} and
\eqref{eq:all-model-directions}, we also have
\[
\Lcal(f)\subset\Lcal(\phi(\Omega(2)))
=\Lcal(\Omega(2))=E.
\]

Conversely, let $\theta$ and $z_n$ be as in
\eqref{eq:model-direction-sequences}, and let $w_n=\phi(z_n)$.
For large $n$, 
\[z_n\in\Omega_j(2)\quad \text{and}\quad
|f(w_n)|=\eta\exp(\operatorname{Re}\tau_j(z_n))\longrightarrow\infty.
\]
Hence, $|w_n|\to\infty$ and $\arg w_n\to\theta$. By
\Cref{prop:large-values-julia}, $\theta\in\Lcal(f)$.
Closedness gives the endpoints of all the arcs, so $\Lcal(f)=E$.
By \Cref{prop:direction-equality-B}, we also have
$\Lcal_A(f)=\Lcal_I(f)=E$.
This completes the proof of \Cref{thm:arcs-main}.

\section{The meromorphic counterpart}\label{sec:mero}

We now prove \Cref{thm:meromorphic-main}. The order and direction estimates
come from the construction of the last two authors. The new ingredient is the
bounded singular value argument, which we give in detail.

Let $E_0$ be a countable dense subset of $E$, which may be finite. Choose a sequence
$\{\widehat\theta_k\}_{k\geq1}$ in $E_0$ such that
every point of $E_0$ appears infinitely many times
in the sequence.

Following \cite[Section~3]{WangYao}, put
\begin{equation*}
 m_1=5,\qquad m_{k+1}=2^{m_1m_2\cdots m_k},
\end{equation*}
and choose
\begin{equation*}
 r_k=
 \begin{cases}
  m_k^{1/\rho},&0<\rho<\infty,\\
  m_k^k,&\rho=0,\\
  m_k^{1/k},&\rho=\infty.
 \end{cases}
\end{equation*}
Clearly, $r_k\to\infty$ and
$r_{k+1}/r_k\to\infty$.
By discarding finitely many initial terms and
relabeling, we may assume that
\[
r_{k+1}-r_k> 4,
\qquad m_k\geq k+4
\qquad(k\geq1).
\]

Now, consider the series 
$$G(z)=\sum_{k=1}^{\infty}(z-a_k)^{-m_k},\quad a_k=r_ke^{i\widehat\theta_k}.$$
Since $|a_k|=r_k\to\infty$, for every compact
set $K\subset\C$, we have
\[
\inf_{z\in K}|z-a_k|
\geq r_k-\sup_{z\in K}|z|
\longrightarrow\infty,
\]
and for sufficiently large $k$, \[
\sum_{n\geq k}|z-a_n|^{-m_n}
\leq \sum_{n\geq1}2^{-m_n}<\infty.
\] uniformly on $K$.
Hence, $G$ is a transcendental meromorphic function
with poles $a_k$ of multiplicities $m_k$.

\begin{lemma}\label[lemma]{lem:pole-separation}
Let $h_k(z)=G(z)-(z-a_k)^{-m_k}$, $k\in\mathbb{N}$.
Then there exist constants $B_0,B_1,H>0$,
independent of $k$, such that
\begin{equation*}
\sup_{z\in D(a_k,1)}|h_k(z)|\leq B_0,
\qquad
\sup_{z\in D(a_k,1)}|h_k'(z)|\leq B_1,
\end{equation*}
and
\begin{equation}\label{eq:outside-pole-disks}
|G(z)|\leq H,
\qquad
z\notin\bigcup_{k\geq1}D(a_k,1).
\end{equation}
Moreover, $G'$ has no zeros in
$D(a_k,1)\setminus\{a_k\}$ for any $k$.
\end{lemma}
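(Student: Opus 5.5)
The plan is to estimate every term of the series directly, using that the poles separate very quickly: $r_{k+1}-r_k>4$ and $r_{k+1}/r_k\to\infty$, while $m_k\geq k+4$. The key geometric fact is that distinct poles lie at distance more than $4$ from each other, since $|a_k-a_n|\geq |r_k-r_n|>4$ for $k\neq n$.

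For $z\in D(a_k,1)$ and $n\neq k$ this gives $|z-a_n|\geq |a_k-a_n|-1>3$. Hence
\[
|h_k(z)|\leq\sum_{n\neq k}3^{-m_n}\leq\sum_{n\geq1}3^{-(n+4)}=:B_0 .
\]
Similarly,
\[
|h_k'(z)|\leq\sum_{n\neq k}m_n3^{-m_n-1}\leq\sum_{m\geq5}m3^{-m}=:B_1 ,
\]
because the $m_n$ are distinct integers at least $5$. Both constants are independent of $k$.

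For \eqref{eq:outside-pole-disks}, take $z$ outside all the disks $D(a_n,1)$. Then $|z-a_n|\geq1$ for every $n$, but the crude bound $1$ is not summable. This is the only delicate point. The fix is to observe that $z$ can be within distance $2$ of at most one pole, because the poles are more than $4$ apart. That single term contributes at most $1$, and every other term is at most $2^{-m_n}$. Thus
\[
|G(z)|\leq 1+\sum_n 2^{-m_n}=:H .
\]

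For the claim about critical points, write $G'(z)=-m_k(z-a_k)^{-m_k-1}+h_k'(z)$ on $D(a_k,1)\setminus\{a_k\}$. There $|z-a_k|<1$, so the first term has modulus $m_k|z-a_k|^{-m_k-1}>m_k\geq5$. On the other hand $|h_k'|\leq B_1$, and I need $B_1<5$.

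This is the step I expect to need care. A naive bound gives $\sum_{m\ge5}m3^{-m}\approx0.02$ (divided by $3$ even), which is comfortably below $5$. I would still sharpen the separation, using $|z-a_n|>3$ and the fact that the $m_n$ are distinct, so the inequality is unambiguous. It then follows that $|G'(z)|>m_k-B_1>0$, so $G'$ has no zeros in the punctured disk.

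If distinctness of the $m_n$ were doubtful, I would instead use the growth $m_{k+1}=2^{m_1\cdots m_k}$, which makes the $m_n$ strictly increasing. This again gives a summable bound well below $5$.
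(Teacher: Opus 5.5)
Your proof is correct and follows the paper's argument essentially step for step. Pole separation by more than $4$ gives $|z-a_n|>3$ on $D(a_k,1)$, which yields $B_0$ and $B_1$. The ``at most one pole within distance $2$'' observation gives $H=1+\sum 2^{-m_n}$. Finally, $|G'(z)|>m_k-B_1>0$ on the punctured disk. The paper phrases the separation as $|a_j-a_k|\ge 4|j-k|$ and bounds $B_1$ via $m_j\ge j+4$ rather than via distinctness of the $m_n$, but these differences are cosmetic. Your numerical value is slightly off: $\sum_{m\ge5}m3^{-m}=11/324\approx0.034$, not $0.02$, which changes nothing.
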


\begin{proof}
Since $r_{k+1}-r_k>4$, we have
\begin{equation}\label{eq:centre-separation}
|a_j-a_k|
\geq |r_j-r_k|
\geq 4|j-k|,
\qquad j\neq k,
\end{equation} and
$$D(a_k, 2)\cap D(a_j, 2)=\emptyset,\qquad  j\neq k$$

If $z\in D(a_k,1)$ and $j\neq k$, then
\[
|z-a_j|\geq |a_j-a_k|-|z-a_k|
> 4|j-k|-1
\ge 3|j-k|.
\]
Hence
\[
|h_k(z)|
\leq\sum_{j\neq k}(3|j-k|)^{-m_j}
\leq\sum_{j\geq1}3^{-m_j}
=:B_0
\]
and
\[
|h_k'(z)|
\leq\sum_{j\neq k}
m_j(3|j-k|)^{-m_j-1}
\leq\sum_{j\geq1}m_j3^{-m_j-1}
=:B_1.
\]
Both series converge since $m_j\geq j+4$.

If $z\not\in D(a_j ,2)$ for all $j$, then
\[
|G(z)|\leq\sum_{j\geq1}2^{-m_j}.
\]

If $z\in D(a_k,2)\setminus D(a_k,1)$, 
then, for $j\neq k$,
\[
|z-a_j|
> 4|j-k|-2
\ge 2|j-k|.
\]
Thus
\[
\sum_{j\neq k}|z-a_j|^{-m_j}
\leq\sum_{j\neq k}(2|j-k|)^{-m_j}
\leq\sum_{j\geq1}2^{-m_j}.
\]
Hence \eqref{eq:outside-pole-disks} holds with
\[
H:=1+\sum_{j\geq1}2^{-m_j}.
\]

Finally, since $m_j\geq j+4$, we have
\[
B_1
=\sum_{j\geq1}m_j3^{-m_j-1}
\leq\sum_{j\geq1}(j+4)3^{-j-5}
<1.
\]
For $0<|z-a_k|<1$, it follows that
\[
\begin{aligned}
|G'(z)|
&\geq m_k|z-a_k|^{-m_k-1}
     -|h_k'(z)|\\
&>m_k-B_1>0.
\end{aligned}
\]
This completes the proof.
\end{proof}

\begin{lemma}\label[lemma]{lem:meromorphic-B}
$G\in \Bmer$ and $S_{\mathrm{fin}}(G)
                         \subset\overline{D(0,H)}.$
Moreover, $\infty$ is not an asymptotic value of $G$.
\end{lemma}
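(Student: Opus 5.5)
We have to show three things: that $G$ has no finite critical values outside $\overline{D(0,H)}$, that it has no finite asymptotic values outside that disk, and that $\infty$ is not an asymptotic value. The plan rests on \Cref{lem:pole-separation}, which splits the plane into the pole disks $D(a_k,1)$ and their complement, where $|G|\le H$.

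\textbf{Critical values.} Let $c$ be a critical point of $G$. By the last assertion of \Cref{lem:pole-separation}, $c$ cannot lie in $D(a_k,1)\setminus\{a_k\}$. The pole $a_k$ itself contributes only the value $\infty$, which is allowed. So $c\notin\bigcup_k D(a_k,1)$, and then $|G(c)|\le H$ by \eqref{eq:outside-pole-disks}. Hence every finite critical value lies in $\overline{D(0,H)}$.

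\textbf{Asymptotic values.} Suppose $\gamma:[0,1)\to\C$ is a curve with $\gamma(t)\to\infty$ and $G(\gamma(t))\to w\in\widehat{\C}$. We must show that either $|w|\le H$ or $w$ cannot occur. The key point is that each pole disk is bounded, and the disks $D(a_k,2)$ are pairwise disjoint.

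So $\gamma$ eventually leaves every fixed disk. Either $\gamma$ returns to $\C\setminus\bigcup_k D(a_k,1)$ at arbitrarily large times, or it eventually stays in a single disk. The second option is impossible, since that disk is bounded and $\gamma\to\infty$.

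Hence there are times $t_n\to1$ with $|G(\gamma(t_n))|\le H$. This forces $|w|\le H$. In particular $w=\infty$ is excluded, and every finite asymptotic value lies in $\overline{D(0,H)}$.

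\textbf{Conclusion.} Combining the two parts gives $\sing(G^{-1})\setminus\{\infty\}\subset\overline{D(0,H)}$, so the closure $S_{\mathrm{fin}}(G)$ also lies in this closed disk. Since $G$ is transcendental meromorphic, $G\in\Bmer$.

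\textbf{Main subtlety.} The delicate step is the asymptotic-value argument: a curve tending to $\infty$ may pass through infinitely many distinct pole disks. This causes no difficulty, because it still meets $\C\setminus\bigcup_k D(a_k,1)$ at arbitrarily late times, and that is all the argument uses.
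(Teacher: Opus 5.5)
Your proposal is correct and follows essentially the same route as the paper: critical points are excluded from the punctured disks $D(a_k,1)\setminus\{a_k\}$ by the last assertion of \Cref{lem:pole-separation}, and a curve tending to $\infty$ cannot have its tail trapped in the pairwise disjoint bounded disks $D(a_k,1)$, so $|G|\le H$ at arbitrarily late times on the curve. The paper argues by contradiction (assuming $v=\infty$ or $|v|>H$) where you argue directly; the one point worth stating explicitly is that passing from ``eventually in $\bigcup_k D(a_k,1)$'' to ``eventually in a single disk'' uses the connectedness of the tail $\gamma([t_0,1))$.
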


\begin{proof}
By \Cref{lem:pole-separation}, every critical
point of $G$ with a finite critical value lies
outside $\bigcup_k D(a_k,1)$.
Hence all finite critical values have modulus
at most $H$.

Suppose that $G$ has an asymptotic value
$v\in\widehat{\C}$ with either $v=\infty$ or $|v|>H$.
Then there exists a curve $\gamma:[0,\infty)\to\C$
such that
\[
|\gamma(t)|\longrightarrow\infty,
\qquad
G(\gamma(t))\longrightarrow v.
\]
In either case, $|G(\gamma(t))|>H$ for all
sufficiently large $t$. By
\eqref{eq:outside-pole-disks}, the tail of $\gamma$
must lie in $\bigcup_k D(a_k,1)$.
Since these disks are pairwise disjoint and the
tail is connected, it must lie in one fixed disk
$D(a_k,1)$, contradicting $|\gamma(t)|\to\infty$.

Thus $\infty$ is not an asymptotic value, and
every finite asymptotic value has modulus at most
$H$. Together with the bound on critical values,
this gives
\(
S_{\mathrm{fin}}(G)\subset\overline{D(0,H)},
\)
and hence $G\in\Bmer$.

\end{proof}

\begin{proof}[Proof of \Cref{thm:meromorphic-main}]
By \cite[Proof of Theorem~1]{WangYao}, the original
series has Nevanlinna order $\rho$. Since deleting
finitely many terms does not change the order, we also have $\rho(G)=\rho$.

Choose $0<\lambda\leq1$ such that
\[
\lambda M(1,G)<\frac12,
\qquad
\lambda H<\frac12,
\]
and put $g:=\lambda G$. Then, $\rho(g)=\rho$.
By \Cref{lem:meromorphic-B} and the same argument
as in the proof of \Cref{thm:compact-main}, 
$g(\overline\D)\subset D(0,1/2)$ and $g\in\Bmer$.
 
Moreover, 
if
$z\notin\bigcup_{k\geq1}D(a_k,2)$, then
\[
|g(z)|
\leq\lambda\sum_{k\geq1}2^{-m_k}
<\frac12.
\]
Hence
\[
\J(g)\subset\bigcup_{k\geq1}D(a_k,2),
\]
which gives $\Lcal(g)\subset E$.
Since every pole $a_k$ belongs to $\J(g)$
and $\{\widehat\theta_k\}$ is dense in $E$,
we obtain $\Lcal(g)=E$. This completes the proof of \Cref{thm:meromorphic-main}.
\end{proof}

  \bibliographystyle{amsalpha}
  \bibliography{bibfile}

@article{Bergweiler,
  author  = {Bergweiler, W.},
  title   = {Iteration of meromorphic functions},
  journal = {Bull. Amer. Math. Soc. (N.S.)},
  volume  = {29},
  year    = {1993},
  pages   = {151--188}
}

@article{BergweilerKotus,
  author  = {Bergweiler, W. and Kotus, J.},
  title   = {On the {Hausdorff} dimension of the escaping set of certain meromorphic functions},
  journal = {Trans. Amer. Math. Soc.},
  volume  = {364},
  year    = {2012},
  pages   = {5369--5394},
  doi     = {10.1090/S0002-9947-2012-05587-2},
  url     = {https://doi.org/10.1090/S0002-9947-2012-05587-2}
}

@article{BishopB,
  author  = {Bishop, C. J.},
  title   = {Models for the {Eremenko--Lyubich} class},
  journal = {J. Lond. Math. Soc. (2)},
  volume  = {92},
  year    = {2015},
  pages   = {202--221},
  doi     = {10.1112/jlms/jdv021},
  url     = {https://doi.org/10.1112/jlms/jdv021}
}

@article{BER2024,
    author = {Bergweiler, W. and Eremenko, A. and Rempe, L.},
    title = {Second order linear differential equations with a basis of
solutions having only real zeros},
    journal = {J. Anal. Math.},
    year = {2024},
    volume = {152},
    pages = {53--108}
}

@incollection{BeardonMinda,
  author    = {Beardon, A. F. and Minda, D.},
  title     = {The hyperbolic metric and geometric function theory},
  booktitle = {Quasiconformal Mappings and Their Applications},
  editor    = {Ponnusamy, S. and Sugawa, T. and Vuorinen, M.},
  pages     = {9--56},
  year      = {2007},
  publisher = {Narosa Publishing House},
  address   = {New Delhi}
}

@article{Warschawski,
  title={On conformal mapping of infinite strips},
  author={Warschawski, S. E.},
  journal={Transactions of the American Mathematical Society},
  volume={51},
  pages={280--335},
  year={1942}
}

@article{BishopS,
  author  = {Bishop, C. J.},
  title   = {Models for the {Speiser} class},
  journal = {Proc. Lond. Math. Soc. (3)},
  volume  = {114},
  year    = {2017},
  pages   = {765--797},
  doi     = {10.1112/plms.12025},
  url     = {https://doi.org/10.1112/plms.12025}
}

@article{EL,
  author  = {Eremenko, A. E. and Lyubich, M. Y.},
  title   = {Dynamical properties of some classes of entire functions},
  journal = {Ann. Inst. Fourier (Grenoble)},
  volume  = {42},
  year    = {1992},
  pages   = {989--1020},
  doi     = {10.5802/aif.1318},
  url     = {https://doi.org/10.5802/aif.1318}
}

@article{Fletcher2021,
  author  = {Fletcher, A. N.},
  title   = {On {Julia} limiting directions in higher dimensions},
  journal = {Comput. Methods Funct. Theory},
  volume  = {21},
  year    = {2021},
  pages   = {587--603},
  doi     = {10.1007/s40315-021-00381-w},
  url     = {https://doi.org/10.1007/s40315-021-00381-w}
}

@article{FletcherSteranka2025,
  author  = {Fletcher, A. N. and Steranka, J. M.},
  title   = {{Phragm{\'e}n--Lindel{\"o}f} principles and {Julia} limiting directions of quasiregular mappings},
  journal = {Ergodic Theory Dynam. Systems},
  volume  = {45},
  year    = {2025},
  pages   = {3325--3343},
  doi     = {10.1017/etds.2025.10140},
  url     = {https://doi.org/10.1017/etds.2025.10140}
}

@book{Goldberg,
  author    = {Goldberg, A. A. and Ostrovskki, I. V.},
  title     = {Value Distribution of Meromorphic functions,Translation of Mathematical Monographs, Vol 236},
  publisher = {American Mathematical Society},
  address   = {Providence, RI},
  year      = {2008}
}

@book{Hayman,
  author    = {Hayman, W. K.},
  title     = {Meromorphic Functions, Oxford Mathematical Monographs},
  publisher = {Clarendon Press},
  address   = {Oxford},
  year      = {1964}
}

@article{Qiao,
  author  = {Qiao, J.},
  title   = {On limiting directions of {Julia} sets},
  journal = {Ann. Acad. Sci. Fenn. Math.},
  volume  = {26},
  year    = {2001},
  pages   = {391--399}
}

@article{Rempe2014,
  author  = {Rempe-Gillen, L.},
  title   = {Hyperbolic entire functions with full hyperbolic dimension and approximation by {Eremenko--Lyubich} functions},
  journal = {Proc. Lond. Math. Soc. (3)},
  volume  = {108},
  year    = {2014},
  pages   = {1193--1225},
  doi     = {10.1112/plms/pdt048},
  url     = {https://doi.org/10.1112/plms/pdt048}
}

@article{RipponStallard,
  author  = {Rippon, P. J. and Stallard, G. M.},
  title   = {Fast escaping points of entire functions},
  journal = {Proc. Lond. Math. Soc. (3)},
  volume  = {105},
  year    = {2012},
  pages   = {787--820},
  doi     = {10.1112/plms/pds001},
  url     = {https://doi.org/10.1112/plms/pds001}
}

@article{WangYao,
  author  = {Wang, J. and Yao, X.},
  title   = {On {Julia} limiting directions of meromorphic functions},
  journal = {Israel J. Math.},
  volume  = {238},
  year    = {2020},
  pages   = {405--430},
  doi     = {10.1007/s11856-020-2037-5},
  url     = {https://doi.org/10.1007/s11856-020-2037-5}
}

@article{WangChen,
  author  = {Wang, J. and Chen, Z.},
  title   = {Limiting directions of {Julia} sets of entire solutions to complex differential equations},
  journal = {Acta Math. Sci. Ser. B (Engl. Ed.)},
  volume  = {37},
  year    = {2017},
  pages   = {97--107}
}

@article{ZhengWangHuang,
  author  = {Zheng, J.-H. and Wang, S. and Huang, Z.-G.},
  title   = {Some properties of {Fatou} and {Julia} sets of transcendental meromorphic functions},
  journal = {Bull. Aust. Math. Soc.},
  volume  = {66},
  year    = {2002},
  pages   = {1--8}
}

@incollection{Schleicher,
  author    = {Schleicher, D.},
  title     = {Dynamics of Entire Functions},
  booktitle = {Holomorphic Dynamical Systems},
  series    = {Lecture Notes in Mathematics},
  volume    = {1998},
  publisher = {Springer},
  address   = {Berlin},
  year      = {2010},
  pages     = {295--339},
  doi       = {10.1007/978-3-642-13171-4_5}
}

@article{EremenkoLyubich1990,
  author  = {Eremenko, A. E. and Lyubich, M. Y.},
  title   = {The Dynamics of Analytic Transformations},
  journal = {Leningrad Mathematical Journal},
  volume  = {1},
  year    = {1990},
  pages   = {563--634}
}

@article{bergweiler25,
	author = {Bergweiler, W. and Rempe, L.},
	doi = {10.1365/s13291-025-00300-1},
	fjournal = {Jahresbericht der Deutschen Mathematiker-Vereinigung},
	issn = {0012-0456,1869-7135},
	journal = {Jahresber. Dtsch. Math.-Ver.},
	mrclass = {37F10 (30D15)},
	mrnumber = {5000874},
	number = {1-4},
	pages = {1--141},
	title = {The escaping set in transcendental dynamics},
	url = {https://doi.org/10.1365/s13291-025-00300-1},
	volume = {127},
	year = {2025}}
\end{document}